%%%%%%%%%%%%%%%%%%%%%%%%%%%%%%%%%%%%%%%%%%%%%%%%%%%%
 %\documentclass[10pt,a4paper]{article}
% \usepackage{amsmath,amssymb,amsthm}
\documentclass[12pt]{amsart}
 \textwidth=14.5cm
 \hoffset=-1cm
\usepackage{bbold}
\usepackage{dsfont}
\usepackage{amsmath}
\usepackage{amsfonts}
 \textheight=21cm       
\usepackage{amssymb}
\usepackage{graphicx}

%%%%%%%%%%%%%%%%%%%%%%%%%%%%%%%%%%%%%%%%%%%%%%%%%%%%

%%%%%%%%%%%%%%%%%%%%%%%%%%%%%%%%%%%%%%%%%%%%%%%%%%%%
\newtheorem{theo}{Theorem}[section]
\theoremstyle{plain}

\newtheorem{cor}[theo]{Corollary}

\newtheorem{example}[theo]{Example}

\newtheorem{lemma}[theo]{Lemma}

\newtheorem{proposition}[theo]{Proposition}
\newtheorem{remark}[theo]{Remark}
\newtheorem{remarks}[theo]{Remarks}

\numberwithin{equation}{section}
                    
%%%%%%%%%%%%%%%%%%%%%%%%%%%%%%%%%%%%%%%%%%%%%%%%%%%% 
\begin{document}
\title{Preservation of uniform continuity under pointwise product}
\author{A. Bouziad and E. Sukhacheva}
\address[]
{D\'epartement de Math\'ematiques,  Universit\'e de Rouen, UMR CNRS 6085, 
Avenue de l'Universit\'e, BP.12, F76801 Saint-\'Etienne-du-Rouvray, France.}
\email{A. Bouziad: ahmed.bouziad@univ-rouen.fr}
\email{ E. Sukhacheva: sirius9113@mail.ru }
\subjclass[2000]{Primary 54C10; Secondary 54C30, 54E40, 54D20}
 
\keywords {Metric space, Bourbaki bounded set, Atsuji space, UC-space, uniform space, uniform continuity, ring, product
of uniformly continuous functions}
\begin{abstract} Let $X$ be a uniform space and $U(X)$ the linear space  of real-valued uniformly continuous functions on  $X$. Our main objective is to give a number of properties 
 characterizing the fact that $U(X)$
is stable under pointwise product in case $X$
is a metric space. Some of these characterizations hold in much  more general circumstances. 
\end{abstract}

\maketitle
\section{Introduction and preliminaries}  
Let $(X,\mathcal U)$ be a Hausdorff uniform space and let $U(X)$ (respectively, $U^*(X)$)
stand for the set of all (bounded) real-valued uniformly continuous on $X$, the reals $\mathbb R$ being
equipped with the usual uniformity. In this paper, we are interested in a  description, possibly 
easy-to-handle,
 of the category of uniform
spaces for which the linear space $U(X)$ is a ring, that is, $U(X)$ is closed under pointwise product.
 As the product
of two bounded uniformly continuous functions
is uniformly continuous, if $U(X)=U^*(X)$ then
  $U(X)$ is a ring.  This is also true if $U(X)$ is locally fine (see \cite{Pu} and \cite{ALM}) or if $U(X)$
coincides with $C(X)$, the ring of real continuous functions on $X$; that is, if $(X,\mathcal U)$
belongs to the class of  u-normal uniform spaces in Nagata's sense \cite{Nag}, nowadays called UC-spaces
or    Atsuji spaces. Let us mention that the study of metric UC-spaces
can be traced back at least to 1947 (Doss \cite{Doss}, Monteiro and Peixoto  \cite{MP}). For more information about UC-spaces, we refer to Beer's book \cite{Be0}; see also \cite{Be}, \cite{HN}, \cite{Ku} and the references therein. \par
The problem  of finding intrinsic conditions that characterize
those metric spaces for which $U(X)$  is ring
is explicitly stated by Nadler in \cite{Na}. The present note was originally motivated  by the recent work \cite{CS} of J. Cabello S\'anchez, where Nadler's question 
is solved as follows:  for every metric space $(X,d)$, $U(X)$ is a ring
if and only if  each  set $B\subset X$ which is not Bourbaki bounded in $X$ contains an infinite uniformly isolated subset. Here,
a set $A\subset X$ is said to be {\it Bourbaki bounded in} the uniform space  $(X,\mathcal U)$
if each $f\in U(X)$ is bounded on $A$ and $A$ is    {\it uniformly isolated in} $(X,{\mathcal U})$
if there is $U\in\mathcal U$ such that $U[x]=\{x\}$ for each $x\in A$ ($A$
is then said to be {\it uniformly $U$-isolated}). 
In Section 2, we shall extend Cabello S\'anchez's criterion    to a large
class of uniform spaces which includes arbitrary products
of metric spaces. \par
In an earlier paper, Artico, Le Donne and Moresco \cite{ALM} proved that for any uniform space $(X,\mathcal U)$, $U(X)$ is a ring if and only
if  every $f\in U(X)$ remains uniformly continuous when $\mathbb R$
is endowed with the uniformity  generated by  polynomial dominated continuous functions. In the same
vein,  a very
recent result established by Beer, Garrido and Mero\~no  \cite[Theorem 2.2]{BGM} asserts that a metric space $(X,d)$ is UC-space iff $1/f\in U(X)$ for each never zero function $f\in U(X)$ (i.e., $U(X)$
is inversion closed \cite{Fr}).   Along the same lines,  we  have  that for any
metric space $(X,d)$, $U(X)$ is a ring if and only if for every $f\in U(X)$ and any $g\in C(\mathbb R)$, 
$g\circ f  \in U(X)$ (that is, $(X,d)$ is  $\mathbb R$-fine in  the sense of \cite{FPV}). This is obtained as consequence of the following result (combining Proposition 2.1 and 2.4): $U(X)$ is a ring iff for each $f\in U(X)$ there is $k\geq 0$ such
that the set $\{x\in X: |f(x)|\geq k\}$ is uniformly isolated.
 \par
By Nagata's theorem \cite[Theorem 3]{Nag},  a metric space $(X,d)$ is  a UC-space  iff
the set $X'$ of limit points is compact and for each $\varepsilon>0$, $X\setminus B(X',\varepsilon)$
is uniformly isolated (here, as  usual, $B(X',\varepsilon)$
is the $\varepsilon$-enlargement of $X'$ with respect to the metric $d$).  See also \cite{Isi} (accordingly to \cite[Theorem 1]{At}), \cite{Hue}
(accordingly to \cite{Be1}) and \cite{Ku} (for proofs and more details).
   Motivated by this characterization of UC-spaces, Cabello S\'anchez
conjectured in his paper \cite{CS} that for every metric space $(X,d)$,   $U(X)$ is a ring
if and only if  there is a Bourbaki bounded set $F\subset X$  such that for any $\varepsilon>0$, the
set $X\setminus B(F,\varepsilon)$ is uniformly isolated.  The  ``if'' part
of this conjecture is established in \cite{CS}. 
We prove in Propositions 2.4 and 2.7 below that the following  slight correction of this conjecture turns out to be  true: $U(X)$ is a ring iff there is a Bourbaki bounded  set $F\subset X$ such that for every $\varepsilon>0$, there is $n\in\mathbb N$
such that $X\setminus B^n(F,\varepsilon)$ is uniformly isolated. \par
 In section 3, we show in Theorem 3.5 that Cabello S\'anchez's conjecture becomes, however, true
for any metric space $(X,d)$ in which Bourbaki bounded sets are totally bounded (e.g, if $(X,d)$ is  non-Archimedean). Along the way, we prove that for any metric space $(X,d)$, the following conditions
are equivalent (Corollary 3.6):  (a) $U(X)$ 
is a ring and every Bourbaki bounded set in $X$ is precompact, (b)  there is a precompact set $K\subset X$ such that for $\varepsilon>0$, $X\setminus B(K,\varepsilon)$ is uniformly isolated and (c)
the completion of $(X,d)$ is a UC-space. The class of metric spaces  having a completion
which is a UC-space was studied by Beer \cite{Be3} and  has been deeply investigated     by T. Jain and S. Kundu in their paper \cite{JK}. No less than twenty-nine equivalent characterizations for a metric space to have a UC completion
are presented in \cite{JK}. The equivalence between  (a) and (c) was  recently established
in \cite[Theorem 3.11]{BGM} and (b) appears to be new.\par
We also give in Section 3 a counter-example to the "only if" part of Cabello S\'anchez's conjecture. 
We learned from Professor J. Cabello S\'anchez that  a counter-example has been given by Beer, Garrido and Mero\~no 
in \cite{BGM},
where his result was subsequently described in terms of the coincidence of the bornology of
Bourbaki bounded subsets with a larger bornology. At first sight, the example in \cite{BGM} seems more complicated than the one proposed here.\par 
 In the final part of Section 3, we investigate the  following natural question:  {\it What are the metrizable spaces $X$ that admit a compatible metric $d$
 such $U(X,d)$ is a ring}? We show in Theorem 3.13 that such a metric exists provided that the set of  limit points of $X$ is contained in a closed finitely chainable
 subspace of $X$.
\section{Some properties  of uniform spaces}
 Our goal in this section  is to
 propose various  properties (items (ii) to (xii) below)  and show that  most of them  characterize  the fact that
$U(X)$ is a ring   for every metric spaces $(X,d)$.  Some of these criteria also apply  to a  class of uniform spaces introduced below
by means of a certain $\omega$-length game; this class  is broad enough to include arbitrary Cartesian products of metric spaces.  \par
For our purpose we shall consider uniformities as were
introduced by Weil, instead of Tukey's coverings approach; so  in this paper
uniformities will be systematically manipulated by means of the set $\mathcal U$ of their filters of entourages. Generally, for undefined  concepts about uniform spaces, we refer 
to  Isbell's book
\cite{Is}. Throughout the paper, all considered $U\in\mathcal U$ are assumed to be open and symmetric (that is, $(x,y)\in U$ iff
$(y,x)\in U)$.  
If $U\in\mathcal U$, $F\subset X$ and $n\in\mathbb N$, then $U^n$ stands for the composition $U\circ\cdots\circ U$, $n$ times, and $U[F]$ denotes the set
of $x\in X$ such that $(x,y)\in U$ for some $y\in F$. \par
Recall that a set $B\subset X$ is Bourbaki bounded  in $(X,\mathcal U)$
if for every $f\in U(X)$, $f(B)$ is bounded in $\mathbb R$. If no confusion can arise, then ``Bourbaki bounded'' will be simplified to ``bounded''. 
It is well-known that $B$ is bounded in $X$ iff for each $U\in\mathcal U$, $B$ is {\it $U$-bounded}, that is, there
are $n\in\mathbb N$ and a finite set $F\subset X$ such that $B\subset U^n[F]$, see for instance \cite{He}.
The set $B$ is said to be {\it totally bounded} (or {\it precompact}) if it is always possible to take $n=1$
in this criterion. Let us mention that in the metric context, the Bourbaki bounded subsets reduce
to precompact sets iff each Bourbaki-Cauchy sequence as defined by Garrido and Mero\~no in their
paper \cite{GM} has a Cauchy subsequence. \par

We denote the positive integers by
$\mathbb N$ and $\mathbb R$ stand for the real line with its usual uniformity. Now we give the definitions of the properties we are going to examine for a given uniform space  $(X,\mathcal U)$:
\begin{itemize}
\item[{\rm (i)}] $U(X)$ is a ring,
\item[{\rm (ii)}] for every $f\in U(X)$ and $g\in U^*(X)$, $fg$
is proximally continuous (the definition is given below),
\item[{\rm (iii)}] for every $f\in U(X)$, there is $k\geq 0$
such that $\{x\in X:|f(x)|\geq k\}$ is uniformly isolated in $X$,
\item[{\rm (iv)}] for every $f\in U(X)$, there is $k\geq 0$ such that $f$
is {\it uniformly locally constant on} $I=\{x\in X:|f(x)|\geq k\}$ (i.e., there is $U\in\mathcal U$ such that for every $x\in I$,
 $f(U[x])=\{f(x)\}$),
 \item[{\rm (v)}] $(X,\mathcal U)$ is {\it $\mathbb R$-fine} (that is, for every $f\in U(X)$
 and $g\in C(\mathbb R)$, $g\circ  f\in U(X)$),
\item[{\rm (vi)}] every unbounded set in $X$ contains an infinite uniformly
isolated set,
\item[{\rm (vii)}] for  every unbounded set $B\subset X$, there are  $U\in\mathcal U$ and
  an infinite set $A\subset B$ such that $U[A]\subset B$.
\item[{\rm (viii)}] for every $U\in\mathcal U$, there are a bounded set $B\subset X$ and $n\in\mathbb N$
such that $X\setminus U^n[B]$ is uniformly discrete,
\item[{\rm (ix)}] for every $U\in\mathcal U$, there are a bounded set $B\subset X$ and $n\in\mathbb N$
such that $X\setminus U^n[B]$ is uniformly isolated,
\item[{\rm (x)}] there is a bounded set $B\subset X$ such that for every $U\in\mathcal U$, there
is $n\in\mathbb N$ such that $X\setminus U^n[B]$ is uniformly isolated,
\item[{\rm (xi)}]  for every $U\in\mathcal U$, there is a bounded set $B\subset X$ such
that $X\setminus U[B]$ is uniformly isolated,
 
\item[{\rm (xii)}] there is a bounded subset $B$ of $X$ such that for every $U\in\mathcal U$,
$X\setminus U[B]$ is uniformly isolated.

 \end{itemize}
 \par
 
    Properties (iii)   should be compared with the characterizations of   UC metric spaces given by condition (6) of \cite[Theorem 1]{At}.  Cabello S\'anchez \cite{CS} has  an example of a metric space $(X,d)$  
 such that $U(X)$ is a ring but for which  there is no bounded set $B\subset X$
 such that $X\setminus B$ is uniformly isolated. So, we can not go further by eliminating the enlargement of
 the bounded set $B$ in (xii). As said above, Cabello S\'anchez also
 proved for metric spaces the equivalence (vi) $\Leftrightarrow$ (i),  the
 implication (xii) $\Rightarrow$ (i) and conjectured that, conversely,  (i) implies (xii).
 Note that this conjecture is at least as strong as 
 the equivalence between (xi) and (i). 
 The equivalence between (iv) and (v) is established (at least in one direction) in \cite{GI} for metric spaces.  \par
The following implications are obvious: (i) $\Rightarrow$ (ii), (v) $\Rightarrow$ (i),  (iii) $\Rightarrow$ (iv),
  (x) $\Rightarrow$ (ix), (xii) $\Rightarrow$ (xi) and (xii) $\Rightarrow$ (x). The next tables
  summarizes the link between the statements proved in the remaining of this section and the relationships between the properties (i)-(x).  The implications 
  that are valid in the class
  of $\beta$-defavorable uniform spaces (defined below) are indicated by (*), whereas the one indicated by $(\dagger)$
  holds  mainly in the context of metric spaces. The  remaining ones hold for arbitrary Hausdorff uniform spaces. 
  \vskip 2mm
  \centerline{
  \begin{tabular}{|c|c|c|c|}
  \hline 
  (ii) $\Rightarrow ^{*}$ (iii) & (iii) $\Rightarrow$ (vi) & (vi) $\Leftrightarrow$ (vii) & (iv) $\Rightarrow$ (v)  \\
  \hline
 2.1 & 2.2 & 2.3 &
  2.4 \\
  \hline
\end{tabular}}

\vskip 2mm

\centerline{
\begin{tabular}{|c|c|c|}
  \hline
   (viii) $\Leftrightarrow$ (ix)   & (ix) $\Rightarrow$ (iii) $\Rightarrow$ (vi) $\Rightarrow^{*}$ (ix) & (ix) $\Rightarrow^{\dagger}$ (x) \\
  \hline
2.5 & 2.6  & 2.7  \\
  \hline
\end{tabular}}

\vskip 2mm
As a consequence, the first ten properties  are equivalent
for metric spaces (this is summarized in Theorem 3.1). On the other hand, as said before, conditions (xi)
and (xii) cannot be added to this list (see Example 3.7); however,   the twelve properties turn out to be  equivalent in every metric space for which
Bourbaki bounded sets are totally bounded (Theorem 3.5).\par
 
\par
Let $(X,\mathcal U)$ be a uniform space and $D$ a dense subset of the product space $X\times X$ when $X$
is given the the topology induced by $\mathcal U$. We shall consider the following game ${\mathcal J}(D)$
between tow Players $\alpha$ and $\beta$. Player $\alpha$ is the first to move and gives $W_0\in\mathcal U$,
and the answer of Player $\beta$ is a point $(x_0,y_0)$ in $W_0\cap D$. At stage  $n\in\mathbb N$, Player $\alpha$
chooses $W_n\in\mathcal U$ and then Player $\beta$ gives $(x_n,y_n)\in W_n\cap D$. The game is of length $\omega$, and
Player $\alpha$ is declared to be the winner of the game $(W_n, (x_n,y_n))_{n\in\mathbb N}$ if for
any infinite set $I\subset\mathbb N$ and $U\in\mathcal U$, there are $n,m\in\mathbb N$
such that $\{n,m\}\cap I\not=\emptyset$ and $(x_n,y_m)\in U$. Otherwise Player $\beta$ wins.\par
We say that the uniform space $(X,\mathcal U)$
is {\it $\beta$-defavorable} if there is  a dense set $D\subset X\times X$ such that Player $\beta$ has no winning strategy in the game ${\mathcal J}(D)$.\par
As an illustration, let $(X_i,{\mathcal U}_i)$, $i\in I$, be a family of nonempty uniform spaces and denote by $(X,\mathcal U)$
their uniform product. Choose $a_i\in X_i$ for each $i\in I$, and let
 $D$ be the set of $x\in X$ for which the set $\{i\in I:x_i\not=a_i\}$ is finite. Then
$D$ is dense in $X$. It is not difficult to show that if each $(X_i,{\mathcal U}_i)$
has a countable basis for its uniformity, then Player $\alpha$
has a strategy $\sigma$ in the game $\mathcal J(D\times D)$ such that
for any $\sigma$-compatible game $(U_n,(x_n,y_n))_{n\in\mathbb N}$, the sequence
$(x_n,y_n)_{n\in\mathbb N}$ converges to the diagonal of $X\times X$ (i.e., every $U\in\mathcal U$
contains all but finitely many $(x_n,y_n)$). In particular, any uniform space which is the product
of metric spaces is $\beta$-defavorable.
\par
 Recall that a
 function $f: X\to\mathbb R$ is said to be {\it proximally continuous} if for any $A\subset X$
and any $\varepsilon>0$, there is $U\in\mathcal U$ such that $f(U[A])\subset B(f(A),\varepsilon)$.
Every uniformly continuous function is proximally
continuous. The converse holds if $X$ is a metric space \cite{Ef}.\par
\par
\vskip 2mm
Our first statement makes the connection between (ii) and (iii):

\begin{proposition} Let $(X,\mathcal U)$ be a $\beta$-defavorable uniform space
and $f\in U(X)$. Suppose that  for every $h\in U^*(X)$, the
product function $fh$ is proximally continuous. Then, there is $n\in\mathbb N$ such
that the set $\{x\in X: |f(x)|\geq n\}$ is uniformly isolated in $(X,\mathcal U)$.
\end{proposition}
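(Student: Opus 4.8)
The plan is to argue by contraposition: assuming that for every $n\in\mathbb N$ the set $I_n=\{x:|f(x)|\geq n\}$ fails to be uniformly isolated, I will produce a single $h\in U^*(X)$ for which $fh$ is not proximally continuous, contradicting the hypothesis. The device that converts the (merely local) failure of uniform isolation into a global witness is the game $\mathcal J(D)$: I will describe a strategy $\tau$ for Player $\beta$, and since $(X,\mathcal U)$ is $\beta$-defavorable this $\tau$ cannot be winning, so there is a $\tau$-play won by $\alpha$. The pairs played by $\beta$ in that play are the raw material for $h$, while the fact that $\alpha$ wins gives exactly the clustering needed to defeat proximal continuity.

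Fix a dense $D\subset X\times X$ witnessing $\beta$-defavorability and an entourage $V^f\in\mathcal U$ with $|f(a)-f(b)|<1$ whenever $(a,b)\in V^f$. Describe $\tau$ as follows: at stage $n$, having seen $\alpha$'s moves $W_0,\dots,W_n$ and the previously played points (which determine $t_{n-1}:=|f(x_{n-1})|$, with $t_{-1}=0$), set $E_n=W_0\cap\cdots\cap W_n\cap V^f$ and use that $I_{\lceil t_{n-1}\rceil+10}$ is not uniformly isolated, together with the density of $D$ and the continuity of $f$, to pick $(x_n,y_n)\in E_n\cap D$ with $x_n\neq y_n$ and $|f(x_n)|\geq t_{n-1}+9$. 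Then $t_n:=|f(x_n)|$ grows without bound with consecutive gaps at least $9$, and since $(x_n,y_n)\in V^f$ one has $|f(y_n)|>t_n-1\to\infty$. The point of the gap is a separation at the level of $f$: for $n\neq m$ the values $|f(x_n)|,|f(y_n)|$ and $|f(x_m)|,|f(y_m)|$ lie in disjoint shells, so every point of $\{x_n,y_n\}$ is $V^f$-far from every point of $\{x_m,y_m\}$; in particular the two-point sets $P_n=\{x_n,y_n\}$ form a $V^f$-uniformly-discrete family, while inside $P_n$ the two points are $V^f$-close.

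Now apply $\beta$-defavorability: $\tau$ is not winning, so fix a $\tau$-play that $\alpha$ wins and work with its pairs $(x_n,y_n)$. Build $h$ by a bump at each $y_n$: choosing a uniformly continuous pseudometric $\rho_n$ subordinate to a symmetric entourage that separates $x_n$ from $y_n$ and whose unit ball at $y_n$ sits inside a fixed half-size entourage $V'$ (with $V'\circ V'\subset V^f$), set $\phi_n=\tfrac{1}{f(y_n)}\max(0,1-\rho_n(\cdot,y_n))$ and $h=\sum_n\phi_n$. The supports are pairwise disjoint by the $V^f$-separation of the $P_n$, so $h(x_n)=0$ and $h(y_n)=1/f(y_n)$; and $h\in U^*(X)$ because the bump heights $1/f(y_n)\to 0$, so for a prescribed tolerance only finitely many bumps are non-negligible and those finitely many are individually uniformly continuous. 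Finally let $A=\{x_n:n\in\mathbb N\}$, so $(fh)(A)=\{0\}$, while $(fh)(y_m)=f(y_m)\cdot(1/f(y_m))=1$ for every $m$. Given any $U\in\mathcal U$, the winning condition (applied with $I=\mathbb N$) furnishes $n,m$ with $(x_n,y_m)\in U$, hence $y_m\in U[A]$ with $(fh)(y_m)=1$; thus $(fh)(U[A])\not\subset B((fh)(A),1/2)$. As $U$ is arbitrary, $fh$ is not proximally continuous, the desired contradiction.

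The step I expect to be the crux is the simultaneous management of two opposing demands on the extracted pairs. Defeating proximal continuity requires the witnesses $y_m$ to come $U$-close to $A$ for every $U$, i.e.\ a clustering property, whereas making $h$ uniformly continuous requires the pairs to be uniformly separated; naively these conflict. The resolution above is to separate the different pairs by forcing their $|f|$-values into disjoint shells (so they are automatically $V^f$-apart), and to let the game supply only the within-pair, diagonal clustering $x_n\approx y_n$ that the winning condition actually delivers once cross-pairs have been separated. Verifying that $\tau$ is a legitimate strategy (the required pairs really exist in $E_n\cap D$, via density of $D$ and continuity of $f$) and that the summed bumps are genuinely uniformly continuous in the absence of a metric are the two places demanding the most care.
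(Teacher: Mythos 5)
Your argument is correct and follows essentially the same route as the paper: a strategy for Player $\beta$ manufactured from the assumed failure of uniform isolation, with the $f$-values pushed into widely separated shells so that distinct pairs are uniformly separated while each pair is internally close, then a play won by $\alpha$ and a bounded uniformly continuous $h$ vanishing on one side of every pair whose product with $f$ violates proximal continuity on the set of those base points. The only real difference is how $h$ is produced --- the paper sets $g(y_n)=1/(n+1)$, $g(z_n)=0$ on the countable subspace and invokes Katetov's extension theorem, whereas you assemble $h$ as a locally finite sum of pseudometric bumps of height $1/f(y_n)$ --- and both devices are standard, your verification of the uniform continuity of the sum being sound.
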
 \begin{proof} Let $D\subset X\times X$
be a dense set such that there is no winning strategy for Player $\beta$
in the game ${\mathcal J}(D)$. Suppose, on the contrary,  that   for every $n\in\mathbb N$ and $U\in\mathcal U$,
there are $y,z\in X$
such that $(y,z)\in U$, $|f(y)|>n$ and $y\not=z$. It is possible to choose
such $(y,z)$ in $D$, since $f$ is continuous and $D$ is dense
in $X\times X$.  Consider the following strategy $\sigma$
for Player $\beta$ in the game ${\mathcal J}(D)$: At stage $n\geq 0$, let $U_n$ be the $n$th move 
of Player  $\alpha$ and assume that $\sigma(U_0,\ldots,U_k)$, $k<n$,
has been defined.  Player $\beta$ chooses $(y_n,z_n)\in U_n\cap D$ such  that $|f(y_{n})|> n+1$
and $y_n\not=z_n$. Since $f$ is uniformly continuous, one can assume that  $|f(y_{n+1})| >1+|f(y_{n})|$ and
 $|f(y_n)-f(z_n)|<1/n$. Note that
 $\{y_n:n\in\mathbb N\}\cap \{z_n:n\in\mathbb N\}=\emptyset$. Finally, define
 $\sigma(U_0,\ldots, U_n)=(y_n,z_n)$. \par
Let $(U_n,(y_n,z_n))_{n\in\mathbb N}$ be a winning game for Player $\alpha$ against the strategy $\sigma$. Following an idea from \cite{CS} (see also \cite{At}), for
each $n\in\mathbb N$, put $g(y_n)=1/(n+1)$ and $g(z_n)=0$. In view of how the sequence $(y_n,z_n)_{n\in\mathbb N}$ has been selected,
$g$ is well defined and  uniformly continuous on the subspace
$\{y_n:n\in\mathbb N\}\cup \{z_n:n\in\mathbb N\}$ of $(X,\mathcal U)$. According to 
 Katetov's theorem \cite{Ka}, $g$ has  a uniformly continuous extension $h: X\to [0,1]$. Let $A=\{z_n:n\in\mathbb N\}$. Since  $fh$ is proximally continuous, there is $U\in\mathcal U$  such that $fh(U[A])\subset B(fh(A),1)$. Since $(U_n,(y_n,z_n))_{n\in\mathbb N}$ is a winning game for Player $\alpha$, there  are $n,m\in\mathbb N$
such that $(y_n,z_m)\in U$. Since $h(A)=\{0\}$, it follows that
 $|fh(y_n)|< 1$, hence $|f(y_n)|<n+1$ which is a contradiction. 
 \end{proof}
For each $U\in\mathcal U$, let $I_U=\{x\in X: U[x]=\{x\}\}$ and let $X'$
stand for the set of limit points of $X$. \par
The implication (iii) $\Rightarrow$ (vi) is a consequence
of the following:

\begin{proposition} Let $(X,\mathcal U)$ be a  uniform space such that for every
$f\in U(X)$ there is $k\geq 0$ such that $\{x\in X: |f(x)|\geq k\}$
is uniformly isolated. Let $B\subset X$. Then, $B$ is bounded if and only if
for each $U\in\mathcal U$, $B\cap I_U$ is finite. In particular,  the set $X'$ of limit points of $X$
is bounded in $X$.
\end{proposition}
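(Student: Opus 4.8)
The plan is to prove the two implications of the equivalence separately and then read off the statement about $X'$. For the forward direction (bounded $\Rightarrow$ finite intersections) I would argue by contraposition: assuming that $B\cap I_U$ is infinite for some $U\in\mathcal U$, I would manufacture an $f\in U(X)$ that is unbounded on $B$, so that $B$ fails to be bounded. Concretely, pick distinct points $x_0,x_1,\ldots$ in $B\cap I_U$, put $A=\{x_i:i\in\mathbb N\}$, and define $f(x_i)=i$ together with $f\equiv 0$ on $X\setminus A$. The decisive point is that each $x_i$ satisfies $U[x_i]=\{x_i\}$ \emph{in $X$} (not merely within $A$): if $(x,y)\in U$ with, say, $x=x_i$, then $y\in U[x_i]=\{x_i\}$, so $y=x_i$, and by symmetry of $U$ the same holds when $y\in A$. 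Hence $(x,y)\in U$ always forces $f(x)=f(y)$, so $f$ is uniformly locally constant and thus belongs to $U(X)$, while $f$ is plainly unbounded on $A\subset B$. I would emphasize that this direction uses neither the standing hypothesis nor any extension theorem.

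For the converse I would invoke the hypothesis directly. Let $f\in U(X)$; I must show $f(B)$ is bounded. By assumption there is $k\geq 0$ such that $I:=\{x\in X:|f(x)|\geq k\}$ is uniformly isolated, i.e.\ $I\subset I_U$ for some $U\in\mathcal U$. Then $B\cap I\subset B\cap I_U$ is finite, whence
\[
\sup_{x\in B}|f(x)|\leq \max\bigl(k,\ \max_{x\in B\cap I}|f(x)|\bigr)<\infty .
\]
Since $f\in U(X)$ was arbitrary, $B$ is Bourbaki bounded.

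Finally, for the ``in particular'' clause I would note that every $U\in\mathcal U$ is open, so $U[x]$ is a neighbourhood of $x$; if $x\in I_U$ then $\{x\}=U[x]$ is open, so $x$ is an isolated point of $X$ and therefore $x\notin X'$. Thus $X'\cap I_U=\emptyset$ for \emph{every} $U\in\mathcal U$, and applying the equivalence just established to $B=X'$ yields that $X'$ is bounded in $X$.

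As to difficulty, I do not anticipate a serious obstacle. The only mildly delicate point is the verification of uniform continuity in the forward direction, and this rests entirely on the strong property $U[x_i]=\{x_i\}$ holding throughout $X$, which eliminates any need for a buffer neighbourhood or for Katetov's extension theorem; the converse and the statement about $X'$ are then essentially immediate.
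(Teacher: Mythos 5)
Your proof is correct and follows essentially the same route as the paper: the standing hypothesis is used only to show that every $f\in U(X)$ is bounded on a set meeting each $I_U$ finitely, while the other implication rests on the fact that an infinite uniformly isolated subset is unbounded. The only difference is that where the paper dismisses that second implication as obvious, you justify it with the explicit uniformly continuous unbounded function $f(x_i)=i$, which is a valid (and Katetov-free) verification.
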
 
\begin{proof} If $B$ is unbounded, then there is  $f\in U(X)$  such that $f(B)$ is unbounded.
Let $k\geq 0$ and $U\in\mathcal U$ be such that the set $I=\{x\in X:|f(x)|\geq k\}$ is uniformly $U$-isolated. Then
$I\subset I_U$ and $B\cap I$ is infinite, hence $B\cap I_U$ is infinite. \par
The converse is obvious, because if $B\cap I_U$  is infinite
 for some $U\in\mathcal U$, then $B\cap I_U$ (hence $B$) is unbounded in $X$.
\end{proof}

 The following corresponds to the equivalence (vi) $\Leftrightarrow$ (vii):
 
\begin{proposition} The following are equivalent for every  uniform space $(X,\mathcal U)$: 
 \begin{itemize}
\item[ {\rm (a)}]
every unbounded set contains an infinite uniformly isolated set,
\item[ {\rm (b)}]   for every unbounded
set $A\subset X$, there are   $U\in\mathcal U$ and an infinite
set $B\subset A$
such that $U[B]\subset A$.
\end{itemize}
\end{proposition}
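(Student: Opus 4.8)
The plan is to prove the two implications separately, the forward one being immediate and the reverse one carrying all the content. For (a) $\Rightarrow$ (b), let $A$ be unbounded; by (a) it contains an infinite uniformly isolated set $B$, so there is $U\in\mathcal U$ with $U[x]=\{x\}$ for every $x\in B$. Then $U[B]=\bigcup_{x\in B}U[x]=B\subset A$, and (b) holds with this $U$ and $B$. So the real work is (b) $\Rightarrow$ (a).

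For (b) $\Rightarrow$ (a), fix an unbounded set $A$. First I would manufacture, inside $A$, an unbounded set whose points are pairwise uniformly separated. Since $A$ is not Bourbaki bounded, there is an entourage $U_0$ such that $A\not\subset U_0^n[F]$ for every $n\in\mathbb N$ and every finite $F\subset X$; pick a symmetric $V_0$ with $V_0^2\subset U_0$ and let $D\subset A$ be a maximal set with the property that $(x,y)\notin V_0$ for distinct $x,y\in D$ (such a $D$ exists by Zorn's lemma). By maximality every point of $A$ is $V_0$-close to a point of $D$, that is $A\subset V_0[D]$. I then claim $D$ is unbounded: otherwise $D\subset U_0^m[G]$ for some $m\in\mathbb N$ and some finite $G\subset X$, whence $A\subset V_0[U_0^m[G]]\subset U_0^{m+1}[G]$ (using $V_0\subset V_0^2\subset U_0$), contradicting the choice of $U_0$.

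The decisive step is then to feed this $D$ into hypothesis (b) and combine the resulting ``collar'' with the separation of $D$. Applying (b) to the unbounded set $D$ yields $V\in\mathcal U$ and an infinite $B\subset D$ with $V[B]\subset D$. Put $W=V_0\cap V\in\mathcal U$. For each $b\in B$ we have $W[b]\subset V[b]\subset V[B]\subset D$, so every point that is $W$-close to $b$ already lies in $D$; but distinct points of $D$ are not $V_0$-related, hence not $W$-related, so the only point of $D$ that is $W$-close to $b$ is $b$ itself. Therefore $W[b]=\{b\}$ for every $b\in B$, i.e. $B$ is an infinite uniformly $W$-isolated subset of $A\supset D\supset B$, which is exactly (a).

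I expect the main obstacle to be conceptual rather than computational: neither unboundedness alone (which only yields a uniformly discrete, not a uniformly isolated, subset) nor the fatness condition (b) alone suffices, and the whole point is that intersecting the separating entourage $V_0$ with the collar entourage $V$ upgrades ``$W$-neighbours lie in $D$'' into ``$W$-neighbours equal the point''. Verifying that $D$ is unbounded (so that (b) can be applied to it) is the one place where I must be attentive to the $U_0^n[F]$-form of Bourbaki boundedness rather than to ordinary total boundedness.
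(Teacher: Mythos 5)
Your proof is correct. The overall strategy coincides with the paper's: extract from the unbounded set an unbounded subset that is uniformly separated, feed it to (b), and intersect the resulting entourage with the separating one to upgrade ``$W$-neighbours lie in the set'' to ``$W$-neighbours are the point itself''. Where you differ is in how the separated unbounded subset is produced. The paper works with the functional definition of Bourbaki boundedness: it picks $f\in U(X)$ unbounded on $A$, a sequence $(a_n)$ with $|f(a_{n+1})|>1+|f(a_n)|$, and an entourage $U$ on which $f$ varies by less than $1$, so that $\{a_n\}$ is automatically $U$-separated and unbounded; applying (b) to $\{a_n:n\in\mathbb N\}$ then finishes in one line. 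You instead use the combinatorial characterization ($B$ bounded iff $B\subset U^n[F]$ for suitable $n$ and finite $F$, for every $U$) together with Zorn's lemma to get a maximal $V_0$-separated subset $D\subset A$, and you must then verify that $D$ is still unbounded via the covering $A\subset V_0[D]$ and the inclusion $V_0[U_0^m[G]]\subset U_0^{m+1}[G]$ --- a step the paper's choice of $(a_n)$ renders unnecessary. Your route is entirely function-free (modulo the well-known equivalence of the two notions of boundedness, which the paper also invokes), at the cost of the extra verification that maximal separated subsets of unbounded sets are unbounded; the paper's route is shorter because the function $f$ simultaneously witnesses unboundedness and provides the separation.
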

 \begin{proof} (a) implies (b) obviously. Suppose that (b)
holds and let $A\subset X$ be an unbounded set. There are $f\in U(X)$
and  a sequence $(a_n)_{n\in\mathbb N}\subset A$ such that 
for each $n\in\mathbb N$, $|f(a_{n+1})|>1+|f(a_n)|$. Let $U\in\mathcal U$ be
such that $|f(x)-f(y)|<1$ whenever $(x,y)\in U$. By (b), there are an infinite  $I\subset \mathbb N$
 and $V\subset U$ such that $V[\{a_n:n\in I\}]\subset \{a_n:n\in\mathbb N\}$. Then
 $\{a_n:n\in I\}$ is uniformly $V$-isolated.
 \end{proof} 
 As said above, the fact that (iv) implies (v) is established in \cite{GI}
 for metric spaces. This also follows 
 from the next general fact,  the proof of which uses the following result from \cite{BL}:
 for every compact set $K\subset\mathbb R$, every continuous function $f:\mathbb R\to\mathbb R$
 is {\it strongly  uniformly continuous at}  $K$, that is, for every $\varepsilon>0$, there
 is $\eta>0$ such that $|f(x)-f(y)|\leq\varepsilon$ whenever $|x-y|<\eta$ and $\{x,y\}\cap K\not=\emptyset$. 

\begin{proposition}
 Let $(X,\mathcal U)$ be uniform space, $f\in U(X)$
 and $g\in C(\mathbb R)$. Suppose that for each $U\in{\mathcal U}$,
 there are $B\subset X$ and $n\in\mathbb N$ such that  $f(B)$ is bounded and
 the composition $g\circ f$ is uniformly continuous on $X\setminus U^n[B]$.
 Then $g\circ f\in U(X)$.
 \end{proposition}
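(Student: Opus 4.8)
The plan is to verify uniform continuity of $g\circ f$ directly from the definition: fixing $\varepsilon>0$, I would produce an entourage $V$ with $|g(f(x))-g(f(y))|\le\varepsilon$ for all $(x,y)\in V$. The two ingredients I would combine are the strong uniform continuity of $g$ at compact subsets of $\mathbb R$ (quoted from \cite{BL}) and the freedom, granted by the hypothesis, to choose the entourage $U$ for which the bounded set $B$ and the integer $n$ are produced.

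First, using that $f\in U(X)$, I would fix $U\in\mathcal U$ with $|f(x)-f(y)|<1$ whenever $(x,y)\in U$, and apply the hypothesis to this $U$ to obtain $B\subset X$ and $n\in\mathbb N$ such that $f(B)$ is bounded and $g\circ f$ is uniformly continuous on $S:=X\setminus U^n[B]$. The first observation is that $f$ is in fact bounded on the whole enlargement $U^n[B]$: if $x\in U^n[B]$, a chain $x=w_0,w_1,\dots,w_n$ with $w_n\in B$ and $(w_{i-1},w_i)\in U$ gives $|f(x)-f(w_n)|<n$ by telescoping, so $f(U^n[B])$ lies in a bounded interval whose closure $K$ is compact. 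By the result of \cite{BL}, $g$ is strongly uniformly continuous at $K$: there is $\eta>0$ with $|g(s)-g(t)|\le\varepsilon$ whenever $|s-t|<\eta$ and $\{s,t\}\cap K\ne\emptyset$.

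Now I would assemble the entourage. Uniform continuity of $f$ gives $W\in\mathcal U$ with $|f(x)-f(y)|<\eta$ for $(x,y)\in W$, and uniform continuity of $g\circ f$ on $S$ gives $V_1\in\mathcal U$ with $|g(f(x))-g(f(y))|\le\varepsilon$ whenever $x,y\in S$ and $(x,y)\in V_1$. Setting $V=V_1\cap W$ and taking any $(x,y)\in V$, I would split into two cases. If both $x$ and $y$ lie in $S$, the bound follows from the choice of $V_1$. Otherwise at least one of them, say $x$, lies in $U^n[B]$, so $f(x)\in K$; since $(x,y)\in W$ forces $|f(x)-f(y)|<\eta$, strong uniform continuity at $K$ gives $|g(f(x))-g(f(y))|\le\varepsilon$. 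As these cases are exhaustive, $g\circ f\in U(X)$.

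The step I expect to require the most care is the transition between the ``bounded region'' $U^n[B]$ and its complement $S$: a pair $(x,y)$ may straddle the two regions, where neither the hypothesis on $S$ nor ordinary uniform continuity of $g$ applies. This is exactly what the strong uniform continuity at $K$ is designed to overcome, since it only requires one of the two values $f(x),f(y)$ to land in $K$; choosing $U$ so that $f$ varies by less than $1$ along $U$-chains is what guarantees that $f(U^n[B])$, and hence the relevant endpoint's image whenever one endpoint is inside, stays within the compact set $K$.
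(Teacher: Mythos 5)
Your proof is correct and follows essentially the same route as the paper's: choose $U$ controlling the oscillation of $f$, observe that $f$ is bounded on $U^n[B]$, invoke the strong uniform continuity of $g$ at the (closure of the) image $f(U^n[B])$ from \cite{BL} to get $\eta$, and intersect the resulting entourage with one witnessing uniform continuity of $g\circ f$ on the complement. The only difference is cosmetic (you spell out the telescoping chain argument for the boundedness of $f$ on $U^n[B]$, which the paper leaves implicit), so there is nothing to add.
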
  
\begin{proof} Let $\varepsilon>0$
and choose $U\in {\mathcal U}$ such that $(x,y)\in U$ implies $|f(x)-f(y)|<\varepsilon$. Let
 $B\subset X$ and $n\in\mathbb N$ be such that $f(B)$ is bounded and $g\circ f$ is
 uniformly continuous on $X\setminus U^n[B]$. Note that $f$ is bounded
on $U^n[B]$, so by the strong uniform continuity of $g$ at $f(U^n[B])$ \cite{BL}, there is $\eta>0$ such 
that $|g(a)-g(b)|<\varepsilon$ for every
$a,b\in  \mathbb R$ satisfying $\{a,b\}\cap f(U^n[B])\not=\emptyset$ and $|a-b|<\eta$.  Let $V_1\in \mathcal U$ be such that $|g(f(x))-g(f(y))|<\varepsilon$
whenever $(x,y)\in V_1$ and $\{x,y\}\subset X\setminus U^n[B]$. 
Since $f$ is uniformly continuous, there is $V_2\in\mathcal U$ such that $|f(x)-f(y)|<\eta$
for every $(x,y)\in V_2$.  Then, for every $(x,y)\in V_1\cap V_2$, we have
$|g(f(x))-g(f(y))|<\varepsilon$.
 \end{proof}
 
Recall that a subset $A$ of a uniform space $(X,\mathcal U)$
is said to be {\it uniformly discrete} if there is $U\in\mathcal U$
such that for each $x\in A$, $U[x]\cap A=\{x\}$. Clearly,
every uniformly isolated set is uniformly discrete but not conversely. However, as stated in the next remark, it is possible to replace ``uniform isolatedness" in (iii)   by the weaker
condition ``uniform discreteness".  Condition (viii)  is derived from (ix)
in the same way. This will bring some simplification in the proof of Proposition 2.6 below. 
 
\begin{remarks} {\rm  Let $(X,\mathcal U)$
 be a uniform space, $B\subset X$ and $n\in\mathbb N$. \par
 1) If the set $I=X\setminus U^n[B]$ is uniformly discrete, then the set $J=X\setminus U^{n+1}[B]$ is uniformly isolated.
 Indeed, 
 let  $V\in\mathcal U$ be such that for every $x\in I$, $I\cap V[x]=\{x\}$. We assume
 that $V\subset U$.  Let $x\in J$
 and $y\in V[x]$. If $y\not=x$, then $y\in U^n[B]$, hence $x\in U^{n+1}[B]$, a contradiction. \par
 2) Similarly, if $f\in U(X)$ and $k\geq 0$ are such that the set $\{x\in X: |f(x)|\geq k\}$
 is uniformly discrete, then for every $\delta>0$, the set  $\{x\in X: |f(x)|\geq k+\delta\}$
 is uniformly isolated.
 }
 \end{remarks}
 
 \begin{proposition}
 Let $(X,\mathcal U)$ be uniform space. Consider the following:
 \begin{itemize}
 \item[{\rm (a)}] For each $U\in{\mathcal U}$,
 there are a bounded set $B\subset X$ and $n\in\mathbb N$ such that $X\setminus U^n[B]$ is uniformly isolated.
 \item[{\rm (b)}] For every $f\in U(X)$, there is $k\geq 0$ such that $\{x\in X: |f(x)|\geq k\}$
 is uniformly isolated.
 \item[{\rm (c)}] Every unbounded set in $X$ contains an infinite uniformly isolated subset.
 \end{itemize} 
 Then {\rm (a)} $\Rightarrow$ {\rm (b)}  $\Rightarrow$ {\rm (c)}. 
 If $(X,\mathcal U)$ is $\beta$-defavorable, then the three conditions are equivalent.
\end{proposition}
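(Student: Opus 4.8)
The plan is to dispatch the two unconditional implications first and then concentrate all the work on $(c)\Rightarrow(a)$ under $\beta$-defavorability. For $(a)\Rightarrow(b)$, fix $f\in U(X)$ and, using uniform continuity, choose $U\in\mathcal U$ with $|f(x)-f(y)|<1$ whenever $(x,y)\in U$. Apply $(a)$ to this $U$ to get a bounded set $B$ and $n\in\mathbb N$ with $X\setminus U^n[B]$ uniformly isolated. Since $B$ is bounded, $M:=\sup_B|f|<\infty$, and chaining along $n$ steps of $U$ gives $|f|<M+n$ on $U^n[B]$; hence $\{x:|f(x)|\ge M+n\}\subset X\setminus U^n[B]$. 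As any subset of a uniformly isolated set is uniformly isolated, taking $k=M+n$ yields $(b)$. The implication $(b)\Rightarrow(c)$ is then just Proposition 2.2: under $(b)$ an unbounded $A$ must meet some $I_U$ in an infinite set, and $A\cap I_U\subset I_U$ is uniformly $U$-isolated, providing the required infinite uniformly isolated subset of $A$.

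For $(c)\Rightarrow(a)$ I argue by contradiction. Assume $(c)$ and fix $U\in\mathcal U$ witnessing the failure of $(a)$, so that for every bounded $B$ and every $n$ the set $X\setminus U^n[B]$ is \emph{not} uniformly isolated. Let $D$ be the dense set from $\beta$-defavorability, and design a strategy $\sigma$ for Player $\beta$ in $\mathcal J(D)$ as follows. At stage $n$, collect all previously produced points into a finite set $S_n$, put $B'=U[S_n]$ (bounded), and use the non-isolation of $X\setminus U^n[B']=X\setminus U^{n+1}[S_n]$ against the entourage $W_n\cap U$ to obtain a point $x$ with $U[x]\cap U^n[S_n]=\emptyset$ together with a distinct $W_n\cap U$-companion $y$. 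Density of $D$ then lets $\beta$ pick the actual move $(x_n,y_n)\in D$ inside the nonempty open set of pairs lying in $W_n\cap U$ with both coordinates in $U[x]$. This guarantees $x_n\ne y_n$, $(x_n,y_n)\in W_n\cap U$, and crucially $x_n,y_n\notin U^n[S_n]$, so every new pair is $U^n$-far from all earlier points.

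Since $(X,\mathcal U)$ is $\beta$-defavorable, $\sigma$ is not winning, so there is a $\sigma$-compatible play won by $\alpha$. A pigeonhole argument on the far-apartness shows that $\{x_n:n\in\mathbb N\}$ is unbounded and that $\{y_n:n\in J_1\}$ is unbounded for every infinite $J_1$. I then apply $(c)$ twice: first to $\{x_n\}$ to get an infinite $J_1$ and $V_1$ with $x_n$ uniformly $V_1$-isolated for $n\in J_1$, then to $\{y_n:n\in J_1\}$ to get an infinite $J\subset J_1$ and $V_2$ with $y_n$ uniformly $V_2$-isolated for $n\in J$. With $V_0=V_1\cap V_2$, both $\{x_n:n\in J\}$ and $\{y_n:n\in J\}$ are uniformly $V_0$-isolated. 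Feeding $I=J$ and $V=V_0$ into $\alpha$'s winning condition yields $n,m$ with $\{n,m\}\cap J\ne\emptyset$ and $(x_n,y_m)\in V_0$; isolation of whichever index lies in $J$ forces $x_n=y_m$, whence $n\ne m$ (else $x_n=y_n$). Writing $\ell=\max(n,m)$, the earlier point lies in $S_\ell\subset U^\ell[S_\ell]$ while the later one avoids $U^\ell[S_\ell]$, so $x_n\ne y_m$ — the desired contradiction, establishing $(a)$.

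The step I expect to be the main obstacle is reconciling the \emph{asymmetry} of the winning condition, which pairs $x_n$ from the first sequence with $y_m$ from the second, against the fact that $(c)$ isolates only one sequence at a time: isolating $\{x_n\}$ alone settles the case $n\in J$ but leaves the case $m\in J$ open, and vice versa. The device that resolves this is to force the \emph{same} infinite index set $J$ to make both $\{x_n\}_{n\in J}$ and $\{y_n\}_{n\in J}$ uniformly isolated at a common scale $V_0$, achieved by the two nested applications of $(c)$ together with intersecting entourages. A secondary technical point is respecting membership in $D$ while keeping each new pair far from all previously chosen points, which is handled by selecting $(x_n,y_n)$ inside a small open box around a non-isolated pair whose first coordinate is separated from $U^n[S_n]$.
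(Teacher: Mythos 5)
Your argument is correct and follows essentially the same route as the paper's: the same chaining argument for (a)$\Rightarrow$(b), the same appeal to Proposition 2.2 for (b)$\Rightarrow$(c), and for (c)$\Rightarrow$(a) the same game-theoretic contradiction --- a strategy for $\beta$ producing distinct close pairs that are $U^n$-far from all earlier points, two nested applications of (c) to isolate both coordinate sequences over a common infinite index set $J$ at a common entourage $V_0$, and the winning condition to force $x_n=y_m$. Your version is in fact a little leaner: the paper first reduces the problem, via Remark 2.5 and the boundedness of $X'$ under (c), to making $X\setminus U^n[X'\cup F]$ uniformly \emph{discrete}, and it shrinks $\beta$'s entourage at each stage so that it isolates all previously chosen points, which is what its final contradiction exploits; you avoid both devices and close the argument directly from the built-in far-apartness, which works. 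Two small repairs are needed. First, $U[S_n]$ need not be Bourbaki bounded (a single $U$-ball can already fail to be, e.g.\ the centre of an infinite uniformly discrete ``star'' of radius comparable to $U$), so do not invoke the negation of (a) for $B'=U[S_n]$; apply it instead to the finite, hence bounded, set $S_n$ with exponent $n+1$, which yields exactly the non-isolation of $X\setminus U^{n+1}[S_n]$ that you use. Second, the open set of pairs in $(W_n\cap U)\cap\bigl(U[x]\times U[x]\bigr)$ meets the diagonal, so picking a point of $D$ there does not by itself guarantee $x_n\neq y_n$; intersect further with the complement of the diagonal (open, since $X$ is Hausdorff), which is harmless because the pair $(x,y)$ itself lies in the resulting nonempty open set.
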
  
\begin{proof} To show the implication (a) $\Rightarrow$ (b), let $f\in U(X)$
and choose $U\in\mathcal U$ 
such that $(x,y)\in U$ implies $|f(x)-f(y)|<1$.  
Let $B\subset X$ be a bounded set and $n\in\mathbb N$ such that $X\setminus U^n[B]$ is uniformly isolated.
Then $f$ is bounded on $U^n[B]$, so there exists $k\geq 0$ such that $\{x\in X: |f(x)|\geq k\}\subset X\setminus U^n(B)$. Thus $\{x\in X: |f(x)|\geq k\}$
 is uniformly isolated.\par
 Condition (b) implies (c) by Proposition 2.2.\par
Assume now that $(X,\mathcal U)$
is $\beta$-defavorable and let us show  that (c) $\Rightarrow$ (a). Let $D$ be
a dense subset of $X\times X$ such that   $(X,\mathcal U)$ is $\beta$-defavorable for the game ${\mathcal J}(D)$. In view of Proposition 2.2 and Remark 2.5, it suffices to show 
that there are a  finite set $F\subset X$
 and $n\in\mathbb N$ such that $X\setminus U^n[X'\cup F]$ is uniformly discrete, where
 $X'$ is the set of limit points of the space $X$.   To do that, we proceed by contradiction, so
 suppose that this is not possible. We will define a strategy $\sigma$
for Player $\beta$ in the game $\mathcal J(D)$ as follows. Let $U_0$
 be the first move of Player $\alpha$. Let $x_0, y_0\not\in U^2[X']$ with $y_0\in U_0[x_0]$
  such that $x_0\not=y_0$ (we will  soon see  that $x_0$ and $y_0$
  can be chosen in $D$). Put $F_1=\{x_0,y_0\}$
  and $\sigma(U_0)=(x_0,y_0)$.  Let $U_n$ be the $n$th move of Player $\alpha$ and 
  put $F_n=\{x_i:i<n\}\cup\{y_i: i<n\}$. Since $X\setminus U^{n+1}[X'\cup F_n]$
is not uniformly discrete, there is  $(x_n,y_n)\in U_n$ such that
$x_n\not=y_n$ and $x_n, y_n\not\in U^{n+1}[X'\cup F_n]$. Then $x_n$ and $y_n$ are not in the closure
of $U^{n}[X'\cup F_n]$, so we can assume that $x_n,y_n\in D$
and $x_n, y_n\not\in U^n[X'\cup F_n]$.  Moreover, since $(X,\mathcal U)$
is Hausdorff and $x_i,y_i\not\in X'$ for each $i<n$, by modifying  $U_n$ if necessary,
we  assume that  $U_n[x_i]=\{x_i\}$ and $U_n[y_i]=\{y_i\}$ for each $i<n$. Define $\sigma(U_0,\ldots,U_n)=(x_n,y_n)$. \par
Let
$(U_n,(x_n,y_n))_{n\in\mathbb N}$ be a winning game for  Player $\alpha$ against the strategy $\sigma$.
Then, for every infinite set $I\subset\mathbb N$,  the sets $\{x_n:n\in I\}$ and $\{y_n:n\in I\}$ are $U$-unbounded, hence  by (c) there are  an infinite  $J\subset \mathbb N$ and $V\in\mathcal U$
 such that $\{x_n:n\in J\}$ and $\{y_n:n\in J\}$ are uniformly $V$-isolated.  On the other hand, there are $n\in \mathbb N$
 and $m\in\mathbb N$, with $n\in J$
 or $m\in J$, such that $(x_n,y_m)\in V$. Then $y_m=x_n$, thus $m>n$ and
  $x_n\in U_m[x_m]$. Since $U_m[x_n]=\{x_n\}$, it follows  that
 $x_m=x_n$, which is impossible.
\end{proof}
 We now come to the proof of the implication
  (ix) $\Rightarrow$  (x)
 for metric spaces. This is proved in the next assertion in a somewhat  more general framework.

\begin{proposition}
 Let $(X,\mathcal U)$ be a  uniform space such that for every $U\in\mathcal U$,
 there are a bounded set $B\subset X$ and $n\in\mathbb N$ such that $X\setminus U^n[B]$
 is uniformly isolated. Suppose further that there
 is a sequence $(U_n)_{n\in\mathbb N}\subset \mathcal U$ such that:
 \begin{itemize}
 \item[{\rm (1)}] every
 infinite uniformly isolated set in $X$ contains an infinite
 set which is uniformly $U_n$-isolated  for some $n\in\mathbb N$,
 \item[{\rm (2)}] for every bounded set $B\subset X$ and $U\in\mathcal U$,
 there are $n,m\in\mathbb N$  such that the set $I=U_n[B]\setminus U^m[B]$ is uniformly 
 isolated and $U_n[I]\subset I$.
 \end{itemize}
 Then, there is
 a bounded set $F\subset X$ such that for each $U\in\mathcal U$,
 there is $n\in\mathbb N$ such that $X\setminus U^n[F]$ is uniformly
 isolated.
\end{proposition}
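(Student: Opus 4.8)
The plan is to fix once and for all a single bounded set $F$ containing the set $X'$ of limit points, and to reduce the conclusion, for an arbitrary $U\in\mathcal U$, to showing that the complement $X\setminus U^m[F]$ is uniformly isolated for a suitable $m\in\mathbb N$. Note first that $F=X'$ is an admissible choice: the standing hypothesis is precisely condition (ix), so by Proposition 2.6 it yields (iii), and then Proposition 2.2 guarantees that $X'$ is bounded. The backbone of the argument is the elementary remark that a union of two uniformly isolated sets is again uniformly isolated: if $A_1$ is uniformly $V_1$-isolated and $A_2$ is uniformly $V_2$-isolated, then for every $x\in A_1\cup A_2$ one has $(V_1\cap V_2)[x]=\{x\}$. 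Thus it will suffice, for each $U$, to decompose $X\setminus U^m[F]$ into finitely many uniformly isolated pieces.

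I would run this decomposition as follows. Given $U\in\mathcal U$, apply the hypothesis (ix) to $U$ to obtain a bounded set $B\subset X$ and $k\in\mathbb N$ with $X\setminus U^k[B]$ uniformly isolated. For any $m$ one has the inclusion $X\setminus U^m[F]\subseteq (X\setminus U^k[B])\cup(U^k[B]\setminus U^m[F])$, which holds irrespective of any relation between the data, so by the union remark everything reduces to the key claim that the \emph{discrepancy} set $D:=U^k[B]\setminus U^m[F]$ is uniformly isolated for an appropriate $m$. Here the choice $F\supseteq X'$ enters: since $X\setminus U^k[B]$ is uniformly isolated, every limit point lies in $U^k[B]$, and a point of $D$ is one reachable from the bounded set $B$ by a $U$-chain yet staying $U^m$-far from every limit point. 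Such points ought to be isolated; the genuine difficulty is that isolated points may accumulate at shrinking scales without admitting a limit in $X$, so that being far from $X'$ does not by itself force \emph{uniform} isolation.

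This is exactly the obstacle that conditions (1) and (2) are designed to remove, and handling it is the main point of the proof. Condition (2), applied to the relevant bounded sets and to $U$, produces annuli of the form $I=U_n[\,\cdot\,]\setminus U^m[\,\cdot\,]$ that are uniformly isolated and, crucially, $U_n$-invariant in the sense $U_n[I]\subseteq I$; this invariance means that such an annulus is a self-contained shell which cannot be $U_n$-connected either to the inner core or to the outer region, so the potentially clustering points of $D$ are in fact trapped inside genuinely uniformly isolated shells. Condition (1) is then used to pass, inside any infinite uniformly isolated set that a failure of the claim would produce, to an infinite subset isolated at one of the fixed scales $U_n$, thereby matching the isolation scale of $D$ with the invariance scale supplied by (2). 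The plan is to combine these two ingredients to exhibit $D$ as a finite union of uniformly isolated invariant shells, whence $D$ is uniformly isolated and the claim follows. I expect the hardest and most delicate step to be this organization of $D$ into finitely many such shells while keeping the single bounded set $F$ fixed across all $U$ --- in effect, converting the per-entourage data furnished by (ix) into one bounded set governing the whole uniformity --- and one must remain aware throughout that, although an enlargement $U[F]$ of a bounded set need not itself be bounded, only the uniform isolation of complements is ever required.
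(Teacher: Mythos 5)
Your setup is sound as far as it goes: the reduction to a single bounded set, the observation that a union of two uniformly isolated sets is uniformly isolated (via the intersection of the two witnessing entourages), and the decomposition $X\setminus U^m[F]\subseteq (X\setminus U^k[B])\cup\bigl(U^k[B]\setminus U^m[F]\bigr)$ are all correct. But the proof stops exactly where the proposition becomes nontrivial. The entire content of the statement is the conversion of per-entourage data into one bounded set, and your text ends with ``the plan is to combine these two ingredients'' and ``I expect the hardest and most delicate step to be this organization of $D$ into finitely many such shells'' --- that step is never carried out. Concretely: (a) you give no mechanism by which condition (2), which produces a \emph{single} annulus $U_n[B]\setminus U^m[B]$ per choice of bounded set $B$ and entourage $U$, yields finitely many shells covering the discrepancy set $D=U^k[B_U]\setminus U^m[X']$; you do not even specify which bounded sets to feed into (2), and $U^k[B_U]$ has no a priori containment in any $U_n[X']$, since the set $B_U$ furnished by the hypothesis for $U$ need not be anywhere near $X'$; (b) your use of condition (1) is described only in the vaguest terms (``matching the isolation scale'') and no contradiction argument is actually run.

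The choice $F=X'$ is also a genuine divergence from what the paper does, and it is not justified. The paper's proof constructs $F=\bigcup_{n}L_n$, where each $L_n$ is a \emph{finite} set with $X\setminus U_n^{k_n}[L_n]$ uniformly isolated, pruned to be minimal in the sense that $U_n[x]\neq\{x\}$ for every $x\in L_n$; the boundedness of this union is itself a nontrivial claim, proved by contradiction using Proposition 2.6 together with hypothesis (1) (an infinite uniformly $U_p$-isolated subset of $F$ would have to meet some $L_q$ with $q\geq p$, violating minimality). Hypothesis (1) exists in the statement precisely for this purpose. Condition (2) is then applied once, to this already-built $F$ and the given $U$, producing $U_p[F]\subseteq U^q[F]\cup I$ with $U_p[I]\subseteq I$, which iterates to $U_p^{k_p}[F]\subseteq U^{qk_p}[F]\cup I$ and finishes the proof via your union remark. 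Your shell idea is in the right spirit for this \emph{last} step, but without the construction of $F$ and the boundedness argument the proof has a hole where its main idea should be. If you want to salvage the approach with $F=X'$, you would need to prove that the finite sets $F_U$ appearing in the proof of Proposition 2.6 can be absorbed, and the paper's minimality device is exactly the tool for that; there is no indication that $X'$ alone suffices in general.
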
  

\begin{proof} We suppose without loss of generality that the sequence $(U_n)_{n\in\mathbb N}$ is decreasing. For each 
  $n\in\mathbb N$, let $L_n\subset X$
  be a  finite set and $k_n\in\mathbb N$  such that  $ X\setminus U_n^{k_n}[L_n]$
  is uniformly isolated. We may suppose that each $L_n$
  is minimal in the sense that  for every $x\in L_n$, $U_n[x]\not=\{x\}$. For if it happens that
  $U_n[x]=\{x\}$ for some $x\in L_n$, then $U_n^{k_n}[x]=\{x\}$, hence $X\setminus U_n^{k_n}[L_n\setminus\{x\}]$
  is uniformly isolated, so $x$ could be removed from the finite set $L_n$.  \par
   We claim  that the set $F=\cup_{n\in\mathbb N}L_n$ is bounded in $X$. To prove this, suppose that $F$
  is not bounded in $X$ and choose by Proposition 2.6 and (1)  an infinite set  $C\subset F$ and 
  $p\in\mathbb N$ such that  $ C$  is uniformly $U_p$-isolated. Since $C$ is infinite, there is  $q\geq p$ 
  such that $ C\cap L_q\not=\emptyset$; choose  $x$ in $ C\cap L_q$. 
  Since $U_q\subset U_p $, we have 
   $U_q[x]\subset U_p[x]=\{x\}$, hence $U_q^{k_q}[x]=\{x\}$,  which is in contradiction to the minimality of $L_q$.  Hence $F$
  must be bounded in $X$ as claimed.\par
  To conclude, let $U\in\mathcal U$ and let us show that $X\setminus U^n[F]$
  is uniformly isolated for some $n\in\mathbb N$.
  By (2), there are  $p,q\in\mathbb N$ such that $U_p[F]\subset U^q[F]\cup I$, where $I$ is uniformly
  isolated and  $U_p[I]\subset I$. Since
  $U_p\circ U=U\circ U_p$ ($U$ and $U_p$ being symmetric), we have 
  $U_p^{k_p}[F]\subset U^{qk_p}[F]\cup I$. Since $I$ and $X\setminus U_p^{k_p}[F]$ are uniformly isolated, we obtain
  that $X\setminus U^{qk_p}[F]$ is uniformly isolated too. 
\end{proof}

\begin{remark} {\rm  Let $(X,\mathcal U)$ be a uniform space and 
let $\mathcal I$ be the set of all uniformly isolated subsets of $X$.  The property (iii) in Section 2 suggests to consider the extended pseudonorm
 $\Vert\cdot\Vert:U(X)\to[0,+\infty]$ defined by 
$$||f||=\inf\{\varepsilon\geq 0: \{x\in X:|f(x)|\geq \varepsilon\}\in{\mathcal I}\}.$$ 
Let us note that, in view of Remark 2.5, an equivalent definition of $\Vert\cdot\Vert$ is obtained
if $\mathcal I$ is replaced by the set of uniformly discrete subsets of $X$. \par
It is easy to check that if $f,g\in U(X)$ and if for some $r\geq 0$, the
sets $\{x\in X:|f(x)|\geq r\}$ and $\{x\in X:|g(x)|\geq r\}$
are uniformly isolated, then $fg\in U(X)$. Hence the
subspace $U^\#(X)$ of $U(X)$ given by all  $f\in U(X)$ such that $||f||<+\infty$ is  a ring  on which
$\Vert\cdot\Vert$ is finite (hence a true pseudonorm). Furthermore, one can prove that $U^\#(X)$ is closed in $U(X)$ and complete with respect
to $\Vert \cdot \Vert$.\par
 
Finally, according to Proposition 2.1, if $(X,\mathcal U)$
is $\beta$-defavorable, then $U^\#(X)$ is the maximal 
ring contained in U(X) and containing $U^*(X)$.    } 
 \end{remark}

 \section{The metric case}
 In view of  the results established in Section 2 and since every metric space is $\beta$-defavorable, we have: 
 
\begin{theo} The ten conditions {\rm (i)-(x)}  are equivalent for every metric space.
 \end{theo}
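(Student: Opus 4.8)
The plan is to read off the equivalence from the web of implications assembled in Section~2, using the two features of the present setting recorded just above the statement: every metric space is $\beta$-defavorable, so the implications marked $(*)$ in the tables are available, and, being metrizable, its uniformity carries a countable decreasing base, which is exactly what is needed to trigger the $(\dagger)$-implication of Proposition~2.7. The backbone is the cycle $\mathrm{(i)}\Rightarrow\mathrm{(ii)}\Rightarrow\mathrm{(iii)}\Rightarrow\mathrm{(iv)}\Rightarrow\mathrm{(v)}\Rightarrow\mathrm{(i)}$. Here $\mathrm{(i)}\Rightarrow\mathrm{(ii)}$, $\mathrm{(iii)}\Rightarrow\mathrm{(iv)}$ and $\mathrm{(v)}\Rightarrow\mathrm{(i)}$ are the obvious implications noted in the text; $\mathrm{(ii)}\Rightarrow\mathrm{(iii)}$ is Proposition~2.1, applicable precisely because $X$ is $\beta$-defavorable; and $\mathrm{(iv)}\Rightarrow\mathrm{(v)}$ is Proposition~2.4. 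This already makes $\mathrm{(i)}$--$\mathrm{(v)}$ pairwise equivalent.

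Next I would attach the remaining five conditions to $\mathrm{(iii)}$, treated as a hub. Proposition~2.6, whose conditions $\mathrm{(a)}$, $\mathrm{(b)}$, $\mathrm{(c)}$ are, respectively, our $\mathrm{(ix)}$, $\mathrm{(iii)}$ and $\mathrm{(vi)}$, gives $\mathrm{(iii)}\Leftrightarrow\mathrm{(vi)}\Leftrightarrow\mathrm{(ix)}$ once the $\beta$-defavorability of $X$ is invoked. Proposition~2.3 supplies $\mathrm{(vi)}\Leftrightarrow\mathrm{(vii)}$ and Remark~2.5 supplies $\mathrm{(viii)}\Leftrightarrow\mathrm{(ix)}$. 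Finally $\mathrm{(x)}\Rightarrow\mathrm{(ix)}$ is obvious, while $\mathrm{(ix)}\Rightarrow\mathrm{(x)}$ is Proposition~2.7. Chaining these together, each of $\mathrm{(vi)}$--$\mathrm{(x)}$ becomes equivalent to $\mathrm{(iii)}$, hence to each of $\mathrm{(i)}$--$\mathrm{(v)}$, and the ten conditions collapse into a single equivalence class.

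The only step carrying genuinely metric content is the appeal to Proposition~2.7, and this is where I expect whatever (modest) obstacle there is to sit: that proposition is phrased for a uniform space carrying a sequence $(U_n)_{n\in\mathbb N}\subset\mathcal U$ subject to its conditions $(1)$ and $(2)$, so I must exhibit such a sequence for $(X,d)$. I would take $U_n=\{(x,y):d(x,y)<1/n\}$, a decreasing countable base of the metric uniformity. Condition $(1)$ is then immediate, since a uniformly $V$-isolated set is already uniformly $U_n$-isolated as soon as $U_n\subset V$, and such an $n$ exists because the $U_n$ form a base. Condition $(2)$ is equally painless in the metrizable case: given a bounded $B$ and an entourage $U$, pick $n$ with $U_n\subset U$; then $U_n[B]\subset U[B]\subset U^m[B]$ for every $m\geq 1$, so the set $I=U_n[B]\setminus U^m[B]$ is empty and vacuously satisfies both the requirement that $I$ be uniformly isolated and $U_n[I]\subset I$. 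With $(1)$ and $(2)$ secured, Proposition~2.7 delivers $\mathrm{(x)}$, and the circle of equivalences closes, completing the proof.
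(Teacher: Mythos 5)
Your proof is correct and takes essentially the same route as the paper, which simply assembles the Section~2 implications (the cycle (i) $\Rightarrow$ (ii) $\Rightarrow$ (iii) $\Rightarrow$ (iv) $\Rightarrow$ (v) $\Rightarrow$ (i) together with Propositions 2.3, 2.6, 2.7 and Remark 2.5) using the $\beta$-defavorability of metric spaces. Your explicit check of hypotheses (1) and (2) of Proposition 2.7 for the base $U_n=\{(x,y):d(x,y)<1/n\}$ --- with $I=\emptyset$ making (2) vacuous --- is precisely the step the paper leaves implicit, and it is sound.
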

 Nadler proved in \cite[Theorem 5.2]{Na} that every metric
 space $(X,d)$ for which $U(X)$ is a ring and in which every metrically bounded set
 is Bourbaki bounded, is the union of  a Bourbaki bounded in itself subspace
 and a uniformly isolated set. So it  is possible to add condition
 (xii)  and, a fortiori, condition (xi)  in Theorem 3.1 for any metric space in  which every metrically bounded subspace
is Bourbaki bounded in itself. We shall show in Theorem 3.5 below that
the same result holds for metric spaces in which Bourbaki bounded sets are precompact. \par
The proof 
of Theorem 3.5 uses the following  three lemmas.

\begin{lemma} Let $(Y,d)$ be a metric space and $X$
a dense subset of $Y$. If  every Bourbaki bounded set in $(X,d)$ is precompact, then  the same property
holds in $(Y,d)$.
\end{lemma}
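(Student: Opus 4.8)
The plan is to reduce precompactness of a Bourbaki bounded subset of $Y$ to precompactness of a countable subset of $X$, exploiting the characterization of precompactness by the existence of Cauchy subsequences. First I would record the key transfer principle. Since $X$ is dense in $Y$ and $\mathbb R$ is complete, each $f\in U(X)$ extends uniquely to some $\tilde f\in U(Y)$, while conversely every $g\in U(Y)$ restricts to an element of $U(X)$. Consequently, for a set $A\subset X$, being Bourbaki bounded in $X$ and being Bourbaki bounded in $Y$ coincide: if $A$ is Bourbaki bounded in $Y$, then each $f\in U(X)$ is the restriction of its extension $\tilde f\in U(Y)$, which is bounded on $A$, so $f$ is bounded on $A$; the reverse implication is analogous, using restriction. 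This equivalence, together with the sequential characterization of precompactness, is what the whole argument hinges on.

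Next, given a Bourbaki bounded set $B\subset Y$, I would take an arbitrary sequence $(y_k)_{k\in\mathbb N}$ in $B$ and, using density of $X$, choose $x_k\in X$ with $d(x_k,y_k)<1/k$. I would then check that $A=\{x_k:k\in\mathbb N\}$ is Bourbaki bounded in $Y$: for any $g\in U(Y)$, the function $g$ is bounded on $B$ because $B$ is Bourbaki bounded, and the uniform continuity of $g$ together with $d(x_k,y_k)\to 0$ forces $g(x_k)$ to lie within distance $1$ of $g(y_k)$ for all large $k$, so $g$ is bounded on $A$. By the transfer principle $A$ is then Bourbaki bounded in $X$, hence precompact by the hypothesis on $X$.

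Finally, since precompact metric spaces are exactly those in which every sequence admits a Cauchy subsequence, I would extract a Cauchy subsequence $(x_{k_j})_{j\in\mathbb N}$ of $(x_k)$, and conclude that the corresponding $(y_{k_j})_{j\in\mathbb N}$ is Cauchy from $d(y_{k_j},y_{k_i})\le d(y_{k_j},x_{k_j})+d(x_{k_j},x_{k_i})+d(x_{k_i},y_{k_i})$, whose outer terms tend to $0$. As every sequence in $B$ then has a Cauchy subsequence, $B$ is precompact in $Y$, which is the assertion.

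The step I expect to carry the most weight is the passage between the two spaces, i.e.\ making sure Bourbaki boundedness of a subset of $X$ is genuinely intrinsic; this is exactly where the extension of uniformly continuous functions to the dense subset is needed. I would also flag the pitfall I am deliberately avoiding: a more naive route would replace $B$ by a trace $X\cap B(B,\varepsilon)$ and claim the enlargement remains Bourbaki bounded, but enlarging a Bourbaki bounded set by a fixed radius need not preserve Bourbaki boundedness in a general metric space, since there may be no intermediate points to realize the short chains required. Routing through sequences and Cauchy subsequences sidesteps this difficulty entirely, which is why I would organize the proof in this order.
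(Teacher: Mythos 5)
Your proof is correct and uses essentially the same argument as the paper: approximate a sequence from $B\subset Y$ by a nearby sequence in $X$, transfer Bourbaki boundedness across the dense inclusion via the unique uniformly continuous extension to $Y$, and invoke the Cauchy-subsequence characterization of precompactness. The only difference is that the paper runs the argument contrapositively (not precompact $\Rightarrow$ not Bourbaki bounded, exhibiting a single unbounded $f\in U(X)$ and extending it), whereas you argue directly; the ingredients are identical.
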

\begin{proof} Let $B\subset Y$ and suppose that $B$ is not precompact or, equivalently, that there is a sequence
$(y_n)_{n\in\mathbb N}\subset B$ without any Cauchy subsequence. For each $n\in\mathbb N$,
let $x_n\in X$ be such that $d(x_n,y_n)<1/n$.  Clearly, the sequence $(x_n)_{n\in\mathbb N}$
has no Cauchy subsequence. It follows that the set $A=\{x_n:n\in\mathbb N\}$ is not Bourbaki bounded in $X$, hence
there exists a uniformly continuous function $f: X\to\mathbb R$ which is not bounded on $A$. Let
$g:Y\to\mathbb R$ be the uniformly continuous extension of $f$ to $Y$. Then $g$
is not bounded on $\{y_n:n\in\mathbb N\}$, hence $B$ is not Bourbaki bounded in $Y$.
\end{proof}

\begin{lemma} Let $X$ be a dense subset of the metric space $(Y,d)$. Then $Y$ has the property that
there exists 
 a precompact subset $K$ of $Y$ such that for each $\varepsilon>0$, $Y\setminus B(K,\varepsilon)$
is uniformly isolated iff $X$ has the same property. 
\end{lemma}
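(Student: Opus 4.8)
The plan is to prove both implications around a single auxiliary observation: \emph{for a subset $A\subseteq X$, $A$ is uniformly isolated in $X$ if and only if it is uniformly isolated in $Y$, and with the same isolation radius $\delta$.} The nontrivial direction uses density of $X$ in $Y$: if $B_X(x,\delta)=\{x\}$ for all $x\in A$ and some $y\in Y$ with $y\neq x$ satisfied $d(x,y)<\delta$, then I would approximate $y$ by a sequence $(x_k)\subset X$; since $d(x,x_k)\to d(x,y)<\delta$, eventually $d(x,x_k)<\delta$ forces $x_k=x$, whence $y=\lim_k x_k=x$, a contradiction. The reverse direction is immediate since $X\subseteq Y$. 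Throughout I write $B(\cdot,\varepsilon)$ for the $\varepsilon$-enlargement and use that $d$, and hence these enlargements, agree on points of $X$.

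For the implication from $Y$ to $X$, suppose $K\subseteq Y$ is precompact with $Y\setminus B(K,\varepsilon)$ uniformly isolated for every $\varepsilon>0$. I would set $K'=X\cap B(K,1)$; this is totally bounded (being contained in the totally bounded enlargement of $K$), hence precompact in $X$, and by density $K\subseteq\overline{K'}$. Then $d(\cdot,K')\leq d(\cdot,K)$, so $B(K,\varepsilon)\subseteq B(K',\varepsilon)$ and therefore $Y\setminus B(K',\varepsilon)\subseteq Y\setminus B(K,\varepsilon)$ is uniformly isolated in $Y$ as a subset of a uniformly isolated set. Intersecting with $X$ gives $X\setminus B(K',\varepsilon)\subseteq Y\setminus B(K',\varepsilon)$, uniformly isolated in $Y$ and hence in $X$ by the trivial half of the observation. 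This yields the property for $X$.

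For the converse, suppose $K'\subseteq X$ is precompact with $X\setminus B(K',\varepsilon)$ uniformly isolated in $X$ for every $\varepsilon>0$; I would take $K=K'$, which is precompact in $Y$. The key step, and the main obstacle, is to show that every point of $Y\setminus X$ lies in $\overline{K'}$. Arguing by contradiction, if $y\in Y\setminus X$ had $d(y,K')=c>0$, I would pick $(x_k)\subset X$ with $x_k\to y$; eventually $d(x_k,K')>c/2$, so the $x_k$ lie in $I:=X\setminus B(K',c/2)$, which is uniformly isolated in $X$ and thus, by the observation, uniformly isolated in $Y$ with some radius $\delta$. But $x_k\to y\notin X$ forces the sequence to have infinitely many distinct terms clustering at $y$, hence two distinct $x_k$ within $\delta$ of each other, contradicting uniform isolation in $Y$. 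Granting this, for each $\varepsilon>0$ we have $Y\setminus X\subseteq\overline{K'}\subseteq B(K',\varepsilon)$, so $Y\setminus B(K',\varepsilon)=X\setminus B(K',\varepsilon)$ is uniformly isolated in $X$, hence in $Y$ by the observation. This establishes the property for $Y$ and completes the equivalence. The delicate point to handle with care is exactly this transfer of uniform isolation across the density together with the resulting confinement of $Y\setminus X$ to $\overline{K'}$; the total-boundedness bookkeeping for $K'$ is routine.
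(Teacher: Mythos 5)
Your transfer observation (a subset of $X$ that is uniformly isolated in $X$ is uniformly isolated in $Y$ with the same radius, by density) is correct, and your argument for the direction from $X$ to $Y$ --- showing that every point of $Y\setminus X$ must lie in $\overline{K'}$, since otherwise a sequence of $X$-points converging to it would be trapped in a uniformly isolated set and still cluster --- is complete; it is precisely the ``straightforward proof'' that the paper omits. The gap is in the other direction. You set $K'=X\cap B(K,1)$ and justify its precompactness by saying it is ``contained in the totally bounded enlargement of $K$''. But the $\varepsilon$-enlargement of a precompact set is in general not precompact, and it need not be under the hypotheses of the lemma. Take $Y=X=\{0\}\cup\{e_n:n\in\mathbb N\}$ with $d(0,e_n)=1/2$ and $d(e_n,e_m)=1$ for $n\neq m$. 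Then $K=\{0\}$ is compact and $Y\setminus B(K,\varepsilon)$ is uniformly isolated for every $\varepsilon>0$ (it is empty or equal to the uniformly $1/2$-isolated set $\{e_n:n\in\mathbb N\}$), yet $X\cap B(K,1)$ is the whole space, which contains an infinite uniformly discrete set and is therefore not precompact. So your candidate $K'$ fails to be precompact and the forward implication is not established; the rest of that paragraph (the inclusions $K\subseteq\overline{K'}$, $B(K,\varepsilon)\subseteq B(K',\varepsilon)$, and the downward transfer of uniform isolation) is fine, but it rests on a false first step.

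The missing idea is the one the paper supplies at exactly this point: instead of taking the full trace of an enlargement, use density together with precompactness of $K$ to choose, for each $n$, a finite set $F_n\subset X$ with $K\subset B(F_n,1/n)$ and (by minimality) $F_n\subset B(K,1/n)$. The union $F=\bigcup_n F_n$ is precompact because for each $n$ all but finitely many of the $F_k$ lie in $B(K,1/n)$, and $K$ itself admits finite $\varepsilon$-nets; then $L=(K\cap X)\cup F$ is a precompact subset of $X$, and one checks that $X\setminus B(L,1/n)$ is uniformly isolated using that $Y\setminus B(K,1/2n)$ is and that $K\subset B(F_{2n},1/2n)$. With this $L$ your intended scheme goes through (up to a factor of $2$ in $\varepsilon$, which is harmless since the property is quantified over all $\varepsilon>0$); as written, however, the precompactness claim is false and the gap is genuine.
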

\begin{proof} Suppose that $Y$ has such a precompact set $K$.
 Since $X$ is dense in $Y$, for each $n\in\mathbb N$ there is a finite set $F_n\subset X$
 such that $K\subset B(F_n,1/n)$. We suppose that each $F_n$
is minimal, so that $F_n\subset B(K, 1/n)$. Since $K$ is precompact and
 $\cup_{k\geq n}F_k\subset B(K,1/n)$ for each $n\in\mathbb N$,   the set  $F=\cup_{n\in\mathbb N} F_n$
is  precompact. 
It follows that the set $L=(K\cap X)\cup F$ is precompact. To conclude, let $n\in\mathbb N$
and  let us show by contradiction that $X\setminus B(L,1/n)$ is uniformly isolated.
In the opposite case, we could find  two sequences $(a_k)_{k\in\mathbb N}$ and $(b_k)_{k\in\mathbb N}$
in $X\setminus B(L,1/n)$ such that $\lim d(a_k,b_k)=0$ and $a_k\not=b_k$
for each $k\in\mathbb N$. Since $Y\setminus B(K,1/2n)$ is uniformly isolated, there
is $k\in\mathbb N$ such that, say,  $a_k\in B(K,1/2n)$.
Since $K\subset B(F_{2n},1/2n)$, we get
 $a_k\in B(F,1/n)$, which is a contradiction.\par
 Conversely, if  $L$ is a precompact set satisfying the required property for $X$, then the closure $K$
 of $L$ in $Y$ works for $Y$. The straightforward proof is omitted.
\end{proof}  
Following Waterhouse \cite{Wa}, a {\it C-sequence} is a sequence of pairs $(x_n,y_n)_{n\in\mathbb N}$ of distinct
points in a metric space $(X,d)$ such that $\lim d(x_n,y_n)=0$.  C-sequences are considered by
Nadler in \cite{Na}, where the involved sequences $(x_n)_{n\in\mathbb N}$ and  $(y_n)_{n\in\mathbb N}$ 
  are called {\it twin sequences}. It is proved
 in \cite[Lemma 5.3]{Na} that if $U(X)$ is a ring then twin sequences are metrically bounded. When we turn to  Doss's article \cite{Doss}, we see that
a sequence $(x_n)_{n\in\mathbb N}\subset X$ is said to be {\it accessible}  if there is a
sequence $(y_n)_{n\in\mathbb N}\subset X$ disjoint from $(x_n)_{n\in\mathbb N}$ such that $\lim d(x_n,y_n)=0$.   Doss proved in \cite[Theorem I]{Doss}, among other things, that $(X,d)$ is a UC-space   iff every accessible  sequence in $X$
has a convergent subsequence.  
  The following  improvement of Nadler's result
  can be considered as  the counterpart for $U(X)$ to be a ring of Doss's  criterion.  This lemma is not new because once expressed in terms of the so-called isolation functional (see \cite{BGM}), we see that
it corresponds to the very recent result \cite[Theorem 3.9]{BGM}.

\begin{lemma} Let $(X,d)$ be a metric space. Then $U(X)$ is a ring if and only if twin
 sequences in $(X,d)$ are Bourbaki bounded in $X$.
 \end{lemma}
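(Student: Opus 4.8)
The plan is to derive the lemma from the equivalence $\mathrm{(i)}\Leftrightarrow\mathrm{(vi)}$ of Theorem 3.1; it then suffices to prove that condition $\mathrm{(vi)}$ holds if and only if the range $\{x_n:n\in\mathbb N\}$ of every twin sequence $(x_n)_{n\in\mathbb N}$ is Bourbaki bounded in $X$. Throughout I use the metric reading of uniform isolation: a point $x$ is \emph{$\delta$-isolated} (with $\delta>0$) if $d(x,z)\ge\delta$ for every $z\ne x$ in $X$, and a set is uniformly isolated precisely when all of its points are $\delta$-isolated for a single $\delta$.

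For the direction $\mathrm{(vi)}\Rightarrow$ ``twin sequences are Bourbaki bounded'' I would argue by contradiction. Suppose $(x_n,y_n)_{n\in\mathbb N}$ is a $C$-sequence, so $x_n\ne y_n$ and $d(x_n,y_n)\to 0$, and assume that $A=\{x_n:n\in\mathbb N\}$ is not Bourbaki bounded. By $\mathrm{(vi)}$, $A$ contains an infinite uniformly isolated subset $A'$, say with parameter $\delta>0$. Choosing for each $a\in A'$ an index $n$ with $x_n=a$ produces, by injectivity, infinitely many indices $n$ with $x_n$ $\delta$-isolated; for each such $n$, since $y_n\ne x_n$ we get $d(x_n,y_n)\ge\delta$, contradicting $d(x_n,y_n)\to 0$. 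Hence $A$ (and symmetrically $\{y_n:n\in\mathbb N\}$) is Bourbaki bounded.

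For the converse I would prove the contrapositive of $\mathrm{(vi)}$. Assume there is an unbounded set $B$ containing no infinite uniformly isolated subset, and pick $f\in U(X)$ together with points $b_n\in B$ satisfying $|f(b_n)|>n$, so that every tail $\{b_n:n\ge N\}$ is still unbounded, hence infinite, and still contains no infinite uniformly isolated subset. Consequently, for each $k\in\mathbb N$ there must be infinitely many $n$ with $b_n$ \emph{not} $(1/k)$-isolated---otherwise some tail would be uniformly $(1/k)$-isolated, contrary to hypothesis. This lets me select indices $n_1<n_2<\cdots$ with $b_{n_k}$ not $(1/k)$-isolated, and then a point $y_k\ne b_{n_k}$ with $d(b_{n_k},y_k)<1/k$. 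Writing $x_k=b_{n_k}$, the pairs $(x_k,y_k)_{k\in\mathbb N}$ form a $C$-sequence while the range $\{x_k:k\in\mathbb N\}$ is unbounded (as $|f(x_k)|\to\infty$), so $(x_k)$ is a twin sequence that is not Bourbaki bounded---the desired contradiction.

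The routine steps are the two reformulations and the verification that a genuine $C$-sequence is produced; the one point deserving care---and the main obstacle---is the diagonal extraction in the converse, where one must guarantee simultaneously that the extracted sequence stays unbounded and that every term admits a genuinely distinct partner at the prescribed scale. Both are secured by the single observation that, on an unbounded set failing $\mathrm{(vi)}$, no tail can be uniformly isolated at any scale $1/k$, so the failure of isolation is available at every term. Combined with Theorem 3.1, this yields $\mathrm{(i)}\Leftrightarrow$ ``twin sequences are Bourbaki bounded''.
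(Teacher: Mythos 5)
Your proof is correct, and while the forward direction is essentially the paper's argument, the converse follows a genuinely different route. For ``$U(X)$ a ring $\Rightarrow$ twin sequences bounded'' the paper invokes condition (iii) of Theorem 3.1 together with Proposition 2.2 (under (iii), a set is bounded iff it meets each $I_U$ in a finite set), which is the same observation you make via (vi): an unbounded range $\{x_n\}$ would contain an infinite uniformly $\delta$-isolated set, incompatible with $d(x_n,y_n)\to 0$. For the converse, however, the paper gives a short direct argument that does not pass through Theorem 3.1 at all: if $fg\notin U(X)$, one extracts a C-sequence $(x_n,y_n)$ with $|fg(x_n)-fg(y_n)|\geq\varepsilon$, notes that $fg$ fails to be uniformly continuous on $A=\{x_n\}\cup\{y_n\}$, and concludes (since products of \emph{bounded} uniformly continuous functions are uniformly continuous) that $f$ or $g$ is unbounded on $A$, so the twin sequences are not Bourbaki bounded. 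You instead prove the contrapositive of (vi) by a diagonal extraction --- from an unbounded set with no infinite uniformly isolated subset you manufacture a non-Bourbaki-bounded twin sequence --- and then appeal to (vi) $\Rightarrow$ (i) of Theorem 3.1. Your extraction is sound (the key point, that no tail can be uniformly $(1/k)$-isolated, is exactly right), but note the trade-off: the implication (vi) $\Rightarrow$ (i) is the expensive part of Theorem 3.1, resting on the game-theoretic Proposition 2.6 and Propositions 2.4--2.7, whereas the paper's converse is elementary and self-contained. Your version buys a purely metric, function-free construction of the bad twin sequence; the paper's buys independence from the heavy machinery in that direction.
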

 \begin{proof} Suppose that $U(X)$ is a ring. By Theorem 3.1, condition (iii) is thus satisfied by $X$. Let $(x_n,y_n)_{n\in\mathbb N}$ be  a C-sequence of $X$. We may suppose that both the sets $\{x_n:n\in\mathbb N\}$ and $\{y_n:n\in\mathbb N\}$ are infinite. Since
 $\lim d(x_n,y_n)=0$, for every $\varepsilon>0$, the set $\{n\in\mathbb N:B(x_n,\varepsilon)=\{x_n\}\}$ is finite. 
 It follows from Proposition 2.2 that the sequence  $(x_n)_{n\in\mathbb N}$ is Bourbaki bounded
 in $X$. Clearly, $(y_n)_{n\in\mathbb N}$ is also Bourbaki bounded in $X$.\par
 To show the converse, we proceed by contradiction. So suppose that
 there are $f,g\in U(X)$ such that $fg\not\in U(X)$. Then there are a C-sequence $(x_n,y_n)_{n\in\mathbb N}$ living in $X$ and $\varepsilon>0$ such that for each $n\in\mathbb N$,
 $|fg(x_n)-fg(y_n)|\geq\varepsilon$. In particular,  $fg$ is not uniformly
 continuous on the metric subspace of $X$ given by $A=\{x_n:n\in\mathbb N\}\cup\{y_n:n\in\mathbb N\}$.
 It follows that at least one of the two functions $f$ or $g$ 
 is not bounded on $A$. Consequently, the sequences $(x_n)_{n\in\mathbb N}$
 and $(y_n)_{n\in\mathbb N}$ are not  Bourbaki bounded in $X$.
 \end{proof}
 In view of Theorem 3.1, the following shows that conditions {\rm (i)-(xii)} are equivalent for every metric space in which
 Bourbaki bounded sets are precompact. 

\begin{theo} Let $(X,d)$ be a metric space in which Bourbaki bounded sets 
are precompact. If $U(X)$ is a ring, then there is a precompact set $L\subset X$ such that
for each $\varepsilon>0$, $X\setminus B(L,\varepsilon)$
is uniformly isolated. 
 \end{theo}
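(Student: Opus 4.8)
The plan is to pass to the completion and prove a Nagata-type structure statement there, exploiting that in a complete space precompactness upgrades to genuine compactness. Let $(Y,d)$ be the completion of $(X,d)$, viewed with $X$ as a dense subspace. By Lemma 3.2, every Bourbaki bounded subset of $Y$ is precompact, hence, since $Y$ is complete, relatively compact. By Lemma 3.3 the conclusion for $X$ follows from the corresponding statement for $Y$, so it suffices to produce a precompact (indeed compact) set $K\subset Y$ such that $Y\setminus B(K,\varepsilon)$ is uniformly isolated for every $\varepsilon>0$.

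First I would transport the ring hypothesis to the completion. Given a twin sequence $(x_n,y_n)$ in $Y$, I would pick $x_n',y_n'\in X$ with $d(x_n,x_n')\to 0$ and $d(y_n,y_n')\to 0$ rapidly enough that $x_n'\neq y_n'$ and $d(x_n',y_n')\to 0$; this is a twin sequence in $X$, so by Lemma 3.4 (using that $U(X)$ is a ring) the sets $\{x_n'\}$ and $\{y_n'\}$ are Bourbaki bounded in $X$. Since every $g\in U(Y)$ restricts to a member of $U(X)$ and $|g(x_n)-g(x_n')|\to 0$ by uniform continuity, the original sequences are Bourbaki bounded in $Y$. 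Thus twin sequences in $Y$ are Bourbaki bounded, and Lemma 3.4 yields that $U(Y)$ is a ring as well.

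I would then take $K=Y'$, the set of limit points of $Y$. Since $U(Y)$ is a ring, condition (iii) holds for $Y$ by Theorem 3.1, so Proposition 2.2 makes $Y'$ Bourbaki bounded; being closed in the complete space $Y$ and precompact, $Y'$ is compact. The core of the argument is to show that $Y\setminus B(Y',\varepsilon)$ is uniformly isolated for each fixed $\varepsilon>0$. Arguing by contradiction, a failure produces for each $k$ points $a_k\in Y\setminus B(Y',\varepsilon)$ and $b_k\neq a_k$ with $d(a_k,b_k)<1/k$; then $(a_k,b_k)$ is a twin sequence, so $\{a_k\}$ is precompact and admits a subsequence $a_{k_j}\to a\in Y$, whence $b_{k_j}\to a$ too. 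Because $a_{k_j}\neq b_{k_j}$, infinitely many of these points differ from $a$ while converging to it, so $a\in Y'$; but $d(a_{k_j},Y')\geq\varepsilon$ forces $d(a,Y')\geq\varepsilon$, a contradiction. With $K=Y'$ in hand, Lemma 3.3 transfers the property back to $X$ and delivers the required precompact set $L\subset X$.

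I expect the main obstacle to be the third step, namely verifying that $Y\setminus B(Y',\varepsilon)$ is uniformly isolated, and in particular the bookkeeping that guarantees the limit $a$ to be a genuine accumulation point of $Y$ (so that $a\in Y'$) rather than an isolated point. This is precisely where completeness is indispensable: the precompactness-to-compactness upgrade furnishes the limit $a$ inside the space, whereas in $X$ itself the limit of the twin sequence could escape, which is exactly why I work in the completion and only transfer at the very end.
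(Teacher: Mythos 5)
Your proposal is correct and follows essentially the same route as the paper's proof: pass to the completion $Y$, use Proposition 2.2 and Lemma 3.2 to make $Y'$ compact, derive a contradiction from a twin sequence in $Y\setminus B(Y',\varepsilon)$ via Lemma 3.4 and precompactness, and transfer back with Lemma 3.3. The only (harmless) deviation is that you transport the ring property to $Y$ through twin sequences and Lemma 3.4, where the paper simply notes that every $f\in U(X)$ extends uniquely to $U(Y)$, so products extend as well.
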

 \begin{proof}  Suppose that $U(X)$ is a ring and let 
   $(Y,d)$  be the completion of $(X,d)$.
 Then $U(Y)$ is a ring, since each  $f\in U(X)$ is the restriction to $X$
 of a unique $g\in U(Y)$. Consequently,  by Proposition 2.2 and Lemma 3.2, the closed subspace $Y'$ of $Y$
 is  precompact, hence compact. \par
 Let $\varepsilon>0$ and suppose that $Y\setminus B(Y',\varepsilon)$
 is not uniformly isolated. Then $Y\setminus B(Y',\varepsilon)$
 contains   twin sequences $(x_n)_{n\in\mathbb N}$
 and $(y_n)_{n\in\mathbb N}$. Since condition (iii) is satisfied by the metric space $(Y,d)$ (Theorem 3.1),
 it follows from Lemma 3.4  that the sequence $(x_n)_{n\in\mathbb N}$
  is Bourbaki bounded in $Y$. By Lemma 3.2, twin sequences in $(Y,d)$
 are precompact, hence $(x_n)_{n\in\mathbb N}$ has a cluster point in $Y'$,
 which is impossible. Consequently, $Y\setminus B(Y',\varepsilon)$
 is  uniformly isolated. We can now apply Lemma 3.3 to conclude the proof.
 \end{proof}
    
 To prove   the next corollary 
we shall  make 
use of the ``if'' part of the following    characterization of metric UC-spaces: $(*)$ $(Y,d)$ is a UC-space if and only 
 if 
   there is a compact set $K\subset Y$ such that for every $\varepsilon>0$,
 the set $Y\setminus B(K,\varepsilon)$ is uniformly isolated.     This  natural  statement   does  not  seem  to  appear   in  the  literature,  although it is not difficult
to obtain: the sufficiency follows from the fact that twin sequences must have cluster points in the compact $K$ and the necessity   follows from \cite[Theorem 3]{Nag}.
 
\begin{cor} For any metric space $(X,d)$, the following are equivalent:
 \begin{itemize}
 \item[{\rm (1)}] $U(X)$ is a ring and every Bourbaki bounded set in $X$ is precompact,
 \item[{\rm (2)}] there is a precompact set $L\subset X$
 such that for each $\varepsilon>0$, $X\setminus B(L,\varepsilon)$ is uniformly isolated,
 \item[{(3)}] the completion of $X$ is a UC-space.
 \end{itemize}
 \end{cor}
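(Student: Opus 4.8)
The plan is to establish the cyclic chain $(1)\Rightarrow(2)\Rightarrow(3)\Rightarrow(1)$, drawing on the machinery already in place.

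The implication $(1)\Rightarrow(2)$ is immediate, being precisely the content of Theorem 3.5: its hypotheses are exactly that $U(X)$ is a ring and that every Bourbaki bounded set in $X$ is precompact, and its conclusion is the existence of a precompact $L$ with $X\setminus B(L,\varepsilon)$ uniformly isolated for all $\varepsilon>0$. For $(2)\Rightarrow(3)$ I would pass to the completion $(Y,d)$ of $(X,d)$. Since $X$ is dense in $Y$ and, by hypothesis, $X$ enjoys the property that some precompact $L$ satisfies ``$X\setminus B(L,\varepsilon)$ is uniformly isolated for every $\varepsilon>0$'', Lemma 3.3 transfers this property to $Y$: there is a precompact $K\subset Y$ with $Y\setminus B(K,\varepsilon)$ uniformly isolated for every $\varepsilon>0$. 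Because $Y$ is complete, the closure $\overline{K}$ is compact; and as $B(K,\varepsilon)\subseteq B(\overline{K},\varepsilon)$, the set $Y\setminus B(\overline{K},\varepsilon)$ is a subset of a uniformly isolated set and hence is itself uniformly isolated. The ``if'' part of the characterization $(*)$ then yields that $Y$ is a UC-space.

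For the remaining implication $(3)\Rightarrow(1)$, assume the completion $Y$ is a UC-space, so that $U(Y)=C(Y)$ is a ring. Since each $f\in U(X)$ is the restriction to $X$ of a unique member of $U(Y)$ and restriction respects pointwise products, the product of two elements of $U(X)$ extends to an element of the ring $U(Y)$ and therefore restricts back to a uniformly continuous function; hence $U(X)$ is a ring. It then remains to show that every Bourbaki bounded $B\subset X$ is precompact. First I would note that Bourbaki boundedness transfers from $X$ to $Y$: the $U$-boundedness criterion is purely metric, and any covering $B\subset U^n[F]$ valid in $X$ (with $F\subset X$) remains valid in $Y$, since $X$-chains are $Y$-chains; thus $B$ is Bourbaki bounded in $Y$. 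Now fix, via $(*)$, a compact $K\subset Y$ with $Y\setminus B(K,\varepsilon)$ uniformly isolated for each $\varepsilon>0$. Because $U(Y)$ is a ring, $Y$ satisfies condition (iii) by Theorem 3.1, so Proposition 2.2 applies in $Y$ and gives that $B\cap I_V$ is finite for every entourage $V$ of $Y$. Taking $V$ to be the entourage witnessing the uniform isolation of $Y\setminus B(K,\varepsilon)$, we obtain that $B\cap\bigl(Y\setminus B(K,\varepsilon)\bigr)$ is finite, so $B\subseteq B(K,\varepsilon)\cup F_\varepsilon$ with $F_\varepsilon$ finite. A standard total-boundedness estimate—covering the compact $K$ by finitely many balls of small radius—then shows $B$ is totally bounded, hence precompact in $Y$ and therefore in $X$.

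I expect the main obstacle to lie in $(3)\Rightarrow(1)$, specifically in rigorously transferring Bourbaki boundedness from $X$ to its completion $Y$ and then converting the ``finitely many isolated points'' conclusion of Proposition 2.2 into genuine total boundedness of $B$. The other two implications are essentially bookkeeping once Theorem 3.5, Lemma 3.3, and the auxiliary characterization $(*)$ are granted, the only subtlety there being the passage from a precompact set in $Y$ to its compact closure, which is legitimate precisely because $Y$ is complete.
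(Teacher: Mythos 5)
Your proposal is correct and follows essentially the same route as the paper: (1)$\Rightarrow$(2) via Theorem 3.5, (2)$\Rightarrow$(3) via Lemma 3.3 together with the criterion $(*)$, and (3)$\Rightarrow$(1) by extending/restricting functions to the completion and showing that a Bourbaki bounded set meets the uniformly isolated complement of an enlargement of a compact set only in a finite set. The only cosmetic differences are that the paper works directly with $Y'$ instead of the compact set $K$ from $(*)$, and that your appeal to condition (iii)/Theorem 3.1 for the finiteness step is unnecessary (that direction of Proposition 2.2 is the "obvious" one).
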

 \begin{proof} The implication (1) $\Rightarrow$ (2) being established in Theorem 3.5,
 it remains to show that (2) implies (3) and that (3) implies (1). Assume (2) and let $Y$
 stand for the completion of $(X,d)$. Then, by Lemma 3.3 and the above criterion $(*)$,
 $Y$ is a UC-space. 
To show that (3) implies (1),  assume that the completion $Y$ of $X$ is a UC-space. Then $U(X)$ is a ring (since each $f\in U(X)$
has a uniform  extension to $Y$). Let $L$ be a Bourbaki bounded set in $X$. Then for every $\varepsilon>0$, 
the set $L\setminus B(Y',\varepsilon)$, being uniformly isolated and Bourbaki bounded in $X$, must be finite.
Consequently, 
since $Y'$ is precompact,  $L$ is precompact too.
\end{proof}
 We refer the reader to \cite{Be3} and \cite{JK}  where several equivalent characterizations for a metric space to have a UC 
completion are established. As said above for Lemma 3.4, the equivalence between (1) and (3) in Corollary 3.6 has been established  recently  in
 \cite[Theorem 3.11]{BGM}. Condition (2) seems to be new. \par
 The following example shows that it is not possible to add condition (xi)
 in Theorem 3.1 for arbitrary metric spaces. In particular, (i) and (xii) are not equivalent
 for metric spaces, which   disproves Cabello S\'anchez's conjecture mentioned  Section 1.
  
  \begin{example}{\rm 
  Let $C$ be the subspace of $\mathbb R\times \mathbb R$ given by the union
  of $\{0\}\times [0,1]$ and all line segments $C_m=[0,1]\times \{1/m\}$,  $m\in\mathbb N$. We endow $C$  
  with the so-called intrinsic metric $\delta$: the distance between
  two points $x,y\in C$ is the length of
the "shortest path" in $C$ between  these points. For instance, the metric $\delta$ coincides
with the Euclidean metric on   $\{0\}\times [0,1]$ and on all lines $[0,1]\times \{1/m\}$, $m\in\mathbb N$.
However, for example, if  $x=(1,1/3)$ and $y=(0,1/4)$, then $\delta(x,y)= 1+1/3-1/4$.
Now, let $Y$ be the subspace of $(C,\delta)$ given by the union of 
$$\{(0,0)\}\cup \{0\}\times\{1/m:m\in\mathbb N\}$$
and the lines $$\{k/m:0\leq k\leq m\}\times\{1/m\},\, \, m\in\mathbb N.$$
The metric space $(X,d)$ that we are looking for is the hedgehog 
of $Y$ with $\omega$ spins, the basis point being $(0,0)$. More precisely, $X$ is the set of all $(n,y)$, with $y\in Y$ and
$n\in\mathbb N$, where all the points $(n,(0,0))$, $n\in\mathbb N$, are identified to
a single point $0$. More precisely, 
the metric $d$ of $X$ is given by 
\begin{itemize}

\item[{--}]\,  $d(0,(n,y))=\delta((0,0),y)$,
\item[--]\,  $d((n,y),(n,z))=\delta (y,z)$,  
\item[--]\,  $d((n,y),(m,z))=\delta ((0,0),y)+\delta((0,0),z)$\,  if $n\not=m$. 
\end{itemize}
To simplify, the point $(n,(k/m,1/m))$ is  written $(n,k/m,1/m)$ and the ``line'' $L_m$
is defined by $$L_m=\{(n,k/m,1/m): n\in\mathbb N, 0\leq k\leq m\}.$$ 
Then:\par
\vskip 2mm
  1) {\it A subset of $X$ is uniformly isolated if and only if it is contained in the union of finitely many lines $L_m$.} 
  \begin{proof} Clear.\end{proof}
 \vskip 2mm
  2) {\it $U(X)$ is a ring}. \par
 \begin{proof} By Theorem 3.1, it suffices to check that condition (x) from Section 2 is satisfied
 if we take $B=\{0\}$. Let $\varepsilon>0$ and choose a positive integer  $m_0$  such that $1/m_0\leq \varepsilon$.
  Let $m\geq m_0$ and choose $l\in\mathbb N$
  so that $lm_0\leq m<(l+1)m_0$.  We shall prove that $X\setminus B^{m_0+1}(0,\varepsilon)$
  is uniformly isolated. For every $n\in\mathbb N$, we have $$[0,1]\subset \bigcup_{0\leq k<m_0}\big[kl/m,(k+1)l/m\big]$$
  and $$d((n,kl/m,1/m),(n,(k+1)l/m,1/m))=l/m\leq 1/m_0,$$ from which it follows that
  $$\{(n,k/m,1/m): k\leq m\}\subset B^{m_0}((n,0,1/m),\varepsilon)\subset B^{m_0+1}(0,\varepsilon).$$
  Consequently, $L_m\subset B^{m_0+1}(0,\varepsilon)$, hence
  $\cup_{m\geq m_0}L_m\subset B^{m_0+1}(0,\varepsilon)$. It follows then from 1) that $X\setminus B^{m_0+1}(0,\varepsilon)$ is uniformly isolated as claimed.
  \end{proof}
 \vskip 2mm
  3) {\it There is no Bourbaki bounded set $F\subset X$  such
  that $X\setminus B(F,1)$ is uniformly isolated.} 
  \begin{proof}  Let $F\subset X$
  be a Bourbaki bounded set. By 2) and Theorem 3.1, for each $m\in\mathbb N$, there is $k_m\in\mathbb N$ such that  $F\cap L_m\subset \{(n,k/m,1/m): n\leq k_m, 0\leq k\leq m\}$.
  Suppose that for some $m\in\mathbb N$, $X\setminus B(F,1)\subset L_1\cup \ldots\cup L_m$.  Let $p>m$
  and $q> k_{p}$. Then $(q,1,1/p)\not\in \cup_{i\leq m}L_i$, so there is $x\in F$
  such that $d((q,1,1/p),x)<1$. Since  
  $F\cap L_{p}\subset\{(n,k/p,1/p): n\leq k_{p},0\leq k\leq p\}$, the point  
  $x$ belongs to the set $Y=X\setminus \{(q,k/p,1/p): k\leq p\}$, which is impossible
  because  
  $d((q,1,1/p),Y)\geq 1$. It follows now from 1) that $X\setminus B(F,1)$ is not uniformly isolated.
  \end{proof}
  }
  \end{example}
  
 We would like to  conclude with  some  observations about
  the following question: {\it What are the metrizable spaces $X$ that admit a compatible metric $d$
 such that $U(X,d)$ is a ring}? We do not know the full answer to this question, but we will show
  that if  $X$ contains a finitely chainable closed subspace  $L\subset X$ such that $X'\subset L$,
  then there is a compatible metric $d$ on $X$ such
  that $U(X,d)$ is a ring (Theorem 3.13). \par
  Recall that a metrizable space $Y$ is said to be {\it finitely chainable} if there is
  a compatible metric $d$ on $Y$ such that $(Y,d)$ is Bourbaki bounded.
 In what follows, if $Y$ is subset of a metric space $(X,d)$, then
 $(Y,d)$ stands for the metric subspace of $X$.

\begin{lemma} Let $L$ be a bounded subset of a metric space $(X,d)$
 and let $M\subset X$ be such that $L\subset M$. If    $X\setminus M$ is
 uniformly isolated, then $L$ is bounded in $(M,d)$. 
 \end{lemma}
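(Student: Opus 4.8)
The plan is to rely on the chain characterization of boundedness recalled in Section~2: a set is Bourbaki bounded if and only if it is $U$-bounded for every entourage $U$, i.e. contained in $U^n[F]$ for some finite $F$ and some $n\in\mathbb N$. Working in the metric space, I write $U_\varepsilon=\{(x,y):d(x,y)<\varepsilon\}$; these form a base for $\mathcal U$, and since $U_\varepsilon\subset U_{\varepsilon'}$ for $\varepsilon\leq\varepsilon'$ we have $U_\varepsilon^n[F]\subset U_{\varepsilon'}^n[F]$, so $U_\varepsilon$-boundedness for small $\varepsilon$ forces $U_{\varepsilon'}$-boundedness for all larger $\varepsilon'$. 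Hence it suffices to prove that, for every sufficiently small $\varepsilon>0$, the set $L$ is $U_\varepsilon$-bounded \emph{in the subspace} $(M,d)$; that is, $L\subset U_\varepsilon^n[F']$ with a finite $F'\subset M$ and with all chains computed inside $M$.

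Let $\delta_0>0$ witness the uniform isolation of $X\setminus M$, so that $d(p,y)\geq\delta_0$ for every $p\in X\setminus M$ and every $y\in X$ with $y\neq p$. I would fix $\varepsilon$ with $0<\varepsilon<\delta_0$. Since $L$ is bounded in $(X,d)$, there are a finite set $F\subset X$ and $n\in\mathbb N$ with $L\subset U_\varepsilon^n[F]$, the enlargement taken in $X$. The key observation is that a chain $x_0,x_1,\ldots,x_m$ with consecutive distances $<\varepsilon$ cannot meet $X\setminus M$ unless it is constant: if some $x_i\in X\setminus M$, then $d(x_i,x_{i\pm1})<\varepsilon<\delta_0$ forces $x_{i\pm1}=x_i$, and iterating collapses the whole chain to the single point $x_i\in X\setminus M$.

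With this in hand the conclusion is immediate. Given $a\in L$, take a chain $x_0\in F,\,x_1,\ldots,x_m=a$ with $d(x_j,x_{j+1})<\varepsilon$ and $m\leq n$. If this chain met $X\setminus M$, the observation above would make it constant, giving $a=x_0\in X\setminus M$; but $a\in L\subset M$, a contradiction. Hence the entire chain, including its starting point $x_0$, lies in $M$, so $x_0\in F':=F\cap M$. Thus every such chain stays inside $M$ and starts in the finite set $F'\subset M$, proving $L\subset U_\varepsilon^n[F']$ in $(M,d)$. As $\varepsilon<\delta_0$ was arbitrary, $L$ is Bourbaki bounded in $(M,d)$.

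The one point to get right---and the only real content of the argument---is that uniform isolation of $X\setminus M$ supplies a \emph{single} gap $\delta_0$ that separates each point of $X\setminus M$ from \emph{all} of $X$, not merely from $X\setminus M$. This is exactly what forbids an $\varepsilon$-chain with $\varepsilon<\delta_0$ from passing through a point of $X\setminus M$, thereby confining to $M$ the chains that witness boundedness of $L$. Everything else is routine bookkeeping with the chain characterization of boundedness.
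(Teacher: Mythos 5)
Your argument is correct and is essentially the paper's own proof: both take an $\varepsilon$-chain witnessing the boundedness of $L$ in $X$ with $\varepsilon$ below the isolation gap of $X\setminus M$, and use the fact that a point of $X\setminus M$ is $\delta_0$-separated from \emph{all} of $X$ to force the chain to stay in $M$ (the paper runs the induction forward from the endpoint in $L$, you argue that a chain touching $X\setminus M$ collapses to a point; these are the same observation). The reduction to small $\varepsilon$, which the paper leaves implicit in ``we may suppose $X\setminus M$ is $\eta$-uniformly isolated,'' is spelled out correctly in your first paragraph.
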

 \begin{proof} Let $\eta>0$ and choose a finite set $F\subset X$ and $n\in\mathbb N $
 such that $L\subset B^n(F,\eta)$. We may suppose that
   $X\setminus M$
 is $\eta$-uniformly isolated. Let $x\in L$. There are $y\in F$ and a finite sequence of distinct
 points $z_1,\ldots,z_n\in X$, 
 with $z_1=x$ and $z_n=y$, such that $d(z_i,z_{i+1})<\eta$ for each $1\leq i<n$. Suppose that $z_i\in M$.
 Then $z_{i+1}\in M$ because otherwise $z_{i+1}=z_i$, since $z_i\in M$, $d(z_i,z_{i+1})<\eta$ and $B(z_{i+1},\eta)=\{z_{i+1}\}$. Since $z_1\in M$, it follows that $\{z_1,\ldots,z_n\}\subset M$. Consequently,
 $L$ is bounded in $(M,d)$.
 \end{proof}
 
\begin{lemma} Let $(X,d)$
 be a metric space and  $M\subset X$. Then $(M,d)$  is bounded 
 if {\rm(}and only if{\rm)} for each $\varepsilon>0$,
 $M$ is bounded in $(B(M,\varepsilon),d)$.
 \end{lemma}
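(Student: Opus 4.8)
The parenthetical ``and only if'' signals that one implication is trivial, and indeed it is: if $(M,d)$ is bounded, then for each $\delta>0$ there are a finite set $F\subset M$ and $n\in\mathbb N$ with $M\subset B^n(F,\delta)$, where the enlargements $B^n(\cdot,\delta)$ are computed in the metric space $(M,d)$. Since any chain lying in $M$ lies a fortiori in the larger subspace $B(M,\varepsilon)$, and since $F\subset M\subset B(M,\varepsilon)$, the same inclusion remains valid when computed in $(B(M,\varepsilon),d)$; hence $M$ is bounded in each $(B(M,\varepsilon),d)$. I would dispatch this direction in one line.

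The substance is the converse, so assume that $M$ is bounded in $(B(M,\varepsilon),d)$ for every $\varepsilon>0$. The plan is to verify the metric criterion for Bourbaki boundedness recalled in the Introduction, that is, to produce for an arbitrary $\delta>0$ a finite set $F\subset M$ and $m\in\mathbb N$ with $M\subset B^m(F,\delta)$, the enlargements being taken inside $(M,d)$. First I would set $\varepsilon=\delta/3$ and apply the hypothesis to the subspace $(B(M,\varepsilon),d)$: boundedness of $M$ there (with enlargement parameter $\varepsilon$) yields a finite set $G\subset B(M,\varepsilon)$ and $m\in\mathbb N$ such that every $x\in M$ is linked to some $g\in G$ by a chain $z_0=g,z_1,\dots,z_m=x$ contained in $B(M,\varepsilon)$ with $d(z_i,z_{i+1})<\varepsilon$ for all $i$. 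Shorter chains are harmless, since $B^m$ absorbs chains of length $\le m$ by padding with a zero-length final step.

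The key device is a projection back into $M$. For each $z\in B(M,\varepsilon)$ I would fix a point $\pi(z)\in M$ with $d(z,\pi(z))<\varepsilon$, taking $\pi(x)=x$ whenever $x\in M$, and then set $F=\pi(G)$, a finite subset of $M$. Given $x\in M$ together with a chain as above, the projected points $w_i=\pi(z_i)$ form a chain lying entirely in $M$ that runs from $w_0\in F$ to $w_m=x$, and the triangle inequality gives
\[
d(w_i,w_{i+1})\le d(w_i,z_i)+d(z_i,z_{i+1})+d(z_{i+1},w_{i+1})<\varepsilon+\varepsilon+\varepsilon=\delta .
\]
Hence $x\in B^m(F,\delta)$ inside $M$, and since $x$ was arbitrary, $M\subset B^m(F,\delta)$; as $\delta>0$ was arbitrary, $(M,d)$ is bounded. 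There is no genuine obstacle here: the only points requiring care are the bookkeeping of which ambient space each enlargement is computed in, and the allotment of the $\varepsilon$-budget, since the three applications of the triangle inequality force the choice $\varepsilon=\delta/3$ so that the projected steps stay below $\delta$.
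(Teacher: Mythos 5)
Your proposal is correct and follows essentially the same route as the paper: the converse is handled by taking $\varepsilon=\delta/3$, selecting for each $z\in B(M,\varepsilon)$ a point of $M$ within $\varepsilon$ of it, and projecting the chain into $M$ with a threefold triangle inequality (your $\pi$ is the paper's selection $s$). Your version is if anything slightly tidier, since you also project the finite set $G$ into $M$ and record the easy direction explicitly.
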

 \begin{proof} Let $\varepsilon>0$
 and put $Y=B(M,\varepsilon/3)$. There are a finite set
 $F\subset X$ and $n\in\mathbb N$ such that $M\subset B_Y^n(F,\varepsilon/3)$, where $B_Y^n(F,\varepsilon/3)$
 is the $n$-iteration of $B_Y(F,\varepsilon/3)$ in the metric space $(Y,d)$. We shall show that
 $M\subset B_M^n(F,\varepsilon)$. For each $z\in B(M,\varepsilon/3)$, select $s(z)\in M$
 such that $d(z,s(z))<\varepsilon/3$. Now let $x\in M$ and choose a finite sequence $z_1,\ldots,z_n\in Y$ such that $x=z_1$, $z_n\in F$ and $d(z_i,z_{i+1})<\varepsilon/3$
 for each $i<n$. Let  $t_1=x$, $t_n=z_n$ and $t_i=s(z_i)$ for $1< i<n$. Then
 $\{t_1,\ldots,t_n\}\subset M$ and $d(t_i,t_{i+1})<\varepsilon$ for each $i<n$. 
 Hence $M\subset B_M^n(F,\varepsilon)$. 
 \end{proof}
 
 Let $(X,d)$ be a metric space and let
  $L\subset X$.  Following Beer \cite{Be}, let
 $\delta_L$ be the metric on $X$ defined by
 $$\delta_L(x,y)=d(x,y)+\max\{d(x,L), d(y,L)\}$$
 when $x\not=y$. Observe that
  for every
 $x\in \overline L$ and $y\in X$, we have $\delta_L(x,y)\leq 2d(x,y)$. Consequently, 
 for every $\varepsilon>0$, $B_d(L,\varepsilon)\subset B_{\delta_L}(L,2\varepsilon)$, and if $X'\subset {\overline L}$ then $d$ and $\delta_L$ are topologically equivalent. Here
and in what follows, $B_d(L,\varepsilon)$ and $B_{\delta_L}(L,\varepsilon)$ stand for the $\varepsilon$-enlargement
of the set $L$ with respect to $d$ and $\delta_L$, respectively.
 
 \begin{lemma}  For every $\eta>0$, $X\setminus B_{\delta_L}(L,\eta)$
 is uniformly $\eta/2$-isolated in $(X,\delta_L)$.  
 \end{lemma}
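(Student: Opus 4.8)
The plan is to reduce the whole statement to one elementary identity, namely that $\delta_L(x,L)=2d(x,L)$ for points $x\notin L$, and then to read off the isolation directly from the definition of $\delta_L$.

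First I would compute the $\delta_L$-distance from an arbitrary $x\in X$ to the set $L$. For $z\in L$ we have $d(z,L)=0$, so $\max\{d(x,L),d(z,L)\}=d(x,L)$, and hence $\delta_L(x,z)=d(x,z)+d(x,L)$ whenever $z\neq x$. Since the term $d(x,L)$ does not depend on $z$, taking the infimum over $z\in L$ gives
$$\delta_L(x,L)=d(x,L)+\inf_{z\in L}d(x,z)=2d(x,L)$$
for every $x\notin L$ (while $\delta_L(x,L)=0$ trivially when $x\in L$). Consequently, membership $x\in X\setminus B_{\delta_L}(L,\eta)$ is equivalent to $\delta_L(x,L)\geq\eta$, which, since $\eta>0$ forces $x\notin \overline L$, amounts to $d(x,L)\geq\eta/2$.

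Next I would verify the isolation. Fix $x\in X\setminus B_{\delta_L}(L,\eta)$, so that $d(x,L)\geq\eta/2$ by the previous step, and let $y\in X$ with $y\neq x$. Directly from the definition of $\delta_L$,
$$\delta_L(x,y)=d(x,y)+\max\{d(x,L),d(y,L)\}\geq d(x,L)\geq\eta/2,$$
where I simply discard the nonnegative term $d(x,y)$ and use $\max\{d(x,L),d(y,L)\}\geq d(x,L)$. Thus no point $y\neq x$ lies in the open $\delta_L$-ball $B_{\delta_L}(x,\eta/2)$, i.e. $B_{\delta_L}(x,\eta/2)=\{x\}$. As this holds uniformly for every $x\in X\setminus B_{\delta_L}(L,\eta)$, the set $X\setminus B_{\delta_L}(L,\eta)$ is uniformly $\eta/2$-isolated in $(X,\delta_L)$, as required.

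There is essentially no hard step: the entire content is the identity $\delta_L(x,L)=2d(x,L)$, after which the bound $\delta_L(x,y)\geq d(x,L)$ is immediate. The only point needing a moment's care is the boundary case $x\in\overline L\setminus L$, where $d(x,L)=0$; but such $x$ automatically satisfy $\delta_L(x,L)=0<\eta$, hence lie in $B_{\delta_L}(L,\eta)$ and never enter the argument.
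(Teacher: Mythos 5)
Your proof is correct and rests on the same two observations as the paper's: that $\delta_L(x,y)\ge d(x,L)$ and that $\delta_L(x,L)\le 2d(x,L)$ (you sharpen the latter to the identity $\delta_L(x,L)=2d(x,L)$ for $x\notin L$). The paper phrases this as a proof by contradiction, but the content is identical, and your direct organization is if anything slightly cleaner.
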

 \begin{proof} Otherwise,  
 there are $x,y\in X$, with $x \in X\setminus B_{\delta_L}(L,\eta)$, such
 that $0<\delta_L(x,y)<\eta/2$. Then $d(x,L)<\eta/2$, hence there is  $z\in L$
  such that $d(x,z)<\eta/2$. It follows that $\delta_L(x,L)\leq \delta_L(x,z)=d(x,z)+d(x,L)\leq 2d(x,z)<
 \eta$,  a contradiction.
 \end{proof}
  In general, the distances $d$ and $\delta_L$ are not uniformly equivalent on $X$, as the following shows. 
 \begin{proposition} The following are equivalent:
 \begin{itemize}
 \item[{\rm (a)}] For every $\varepsilon>0$, $X\setminus B_d(L,\varepsilon)$
 is uniformly isolated in $(X,d)$. 
 \item[{\rm (b)}] $d$ and $\delta_L$ are uniformly equivalent on $X$.
  \end{itemize}
 \end{proposition}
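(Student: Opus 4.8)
The plan is to exploit two elementary facts about $\delta_L$ that reduce the equivalence to short metric estimates. First, since $\delta_L(x,y)\geq d(x,y)$ for all $x\neq y$, the identity map $(X,\delta_L)\to (X,d)$ is $1$-Lipschitz and hence always uniformly continuous; consequently (b) is equivalent to the single statement that the identity $(X,d)\to(X,\delta_L)$ is uniformly continuous, i.e. that for every $\eta>0$ there is $\rho>0$ with $d(x,y)+\max\{d(x,L),d(y,L)\}<\eta$ whenever $0<d(x,y)<\rho$. Second, $\delta_L\geq d$ yields $\delta_L(x,L)\geq d(x,L)$, so $B_{\delta_L}(L,\varepsilon)\subset B_d(L,\varepsilon)$ for every $\varepsilon>0$; and the property ``uniformly isolated'' is invariant under passage to a uniformly equivalent metric (a set that is uniformly $\rho$-isolated for $\delta_L$ is uniformly $\rho'$-isolated for $d$, where $\rho'$ is supplied by the uniform continuity of the identity $(X,d)\to(X,\delta_L)$). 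These two observations let me transfer isolation between the metrics once (b) is in force.

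For (b) $\Rightarrow$ (a) I would argue as follows. Fix $\varepsilon>0$. By the preceding lemma (asserting that $X\setminus B_{\delta_L}(L,\eta)$ is uniformly $\eta/2$-isolated in $(X,\delta_L)$ for every $\eta>0$), the set $X\setminus B_{\delta_L}(L,\varepsilon)$ is uniformly isolated in $(X,\delta_L)$. Since $B_{\delta_L}(L,\varepsilon)\subset B_d(L,\varepsilon)$, the set $X\setminus B_d(L,\varepsilon)$ is contained in $X\setminus B_{\delta_L}(L,\varepsilon)$ and is therefore itself uniformly isolated in $(X,\delta_L)$. Invoking (b), uniform isolation in $(X,\delta_L)$ coincides with uniform isolation in $(X,d)$, so $X\setminus B_d(L,\varepsilon)$ is uniformly isolated in $(X,d)$. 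As $\varepsilon>0$ was arbitrary, this is precisely (a).

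For (a) $\Rightarrow$ (b) I would give the estimate directly, using the reduction of the first paragraph. Let $\eta>0$. Applying (a) with $\varepsilon=\eta/2$, the set $X\setminus B_d(L,\eta/2)$ is uniformly isolated, so there is $\rho\in(0,\eta/2]$ such that $B_d(x,\rho)=\{x\}$ for every $x$ with $d(x,L)\geq \eta/2$. Now take any $x\neq y$ with $d(x,y)<\rho$. If $d(x,L)\geq\eta/2$, then $x$ would be $\rho$-isolated, contradicting $y\in B_d(x,\rho)$ and $y\neq x$; hence $d(x,L)<\eta/2$, and symmetrically $d(y,L)<\eta/2$. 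Therefore $\delta_L(x,y)=d(x,y)+\max\{d(x,L),d(y,L)\}<\rho+\eta/2\leq\eta$, which establishes uniform continuity of $(X,d)\to(X,\delta_L)$ and thus (b).

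I do not expect a genuine obstacle here; the only points demanding care are the asymmetry in the definition of uniform isolation --- in the direction (a) $\Rightarrow$ (b) the neighbour $y$ of $x$ need not itself lie in $X\setminus B_d(L,\eta/2)$, which is why it is essential to use the isolation of $x$ alone to deduce $d(x,L)<\eta/2$ --- together with the bookkeeping of constants so that the final sum is bounded by $\eta$ rather than by a larger multiple of it.
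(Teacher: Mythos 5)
Your proof is correct in both directions. The direction (a) $\Rightarrow$ (b) is essentially the paper's argument: both reduce (b) to the uniform continuity of the identity $(X,d)\to(X,\delta_L)$ (the reverse direction being automatic since $\delta_L\geq d$) and both use the uniform isolation of $X\setminus B_d(L,\varepsilon')$ to force $d(x,L)$ and $d(y,L)$ to be small; you use the isolation of each of $x$ and $y$ separately with the constants $\eta/2$, while the paper isolates only one point and bounds the other via the triangle inequality $d(y,L)\leq d(y,x)+d(x,L)$ with constants $\varepsilon/3$ --- an immaterial difference. For (b) $\Rightarrow$ (a) you take a genuinely different, ``softer'' route: you invoke Lemma 3.10 to get uniform isolation of $X\setminus B_{\delta_L}(L,\varepsilon)$ in $(X,\delta_L)$, observe the inclusion $B_{\delta_L}(L,\varepsilon)\subset B_d(L,\varepsilon)$ (so that $X\setminus B_d(L,\varepsilon)$ is a subset of a uniformly isolated set), and then transfer uniform isolation back to $(X,d)$ using uniform equivalence. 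The paper instead argues directly by contraposition: if $X\setminus B_d(L,\varepsilon)$ is not uniformly isolated one extracts $x_n\neq y_n$ with $d(x_n,y_n)\to 0$ while $\delta_L(x_n,y_n)\geq d(x_n,L)\geq\varepsilon$, contradicting uniform equivalence in two lines. Your version buys a reusable general principle (uniform isolation is preserved under uniformly equivalent metrics, plus the enlargement comparison), at the cost of depending on Lemma 3.10; the paper's is shorter and self-contained. Both are sound.
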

 \begin{proof} Suppose that (a) holds. Let $\varepsilon>0$. Choose $\eta>0$ such that $\eta<\varepsilon/3$
 and $X\setminus B_d(L,\varepsilon/3)$ is uniformly $\eta$-isolated
 in $(X,d)$. Let $x,y\in X$
 be such that $d(x,y)<\eta$. If $x$ and $y$ are not in $ B_d(L,\varepsilon/3)$, then $x=y$, hence
 $\delta_L(x,y)=0$. If $\{x,y\}\cap B_d(L,\varepsilon/3)\not=\emptyset$, say $x\in B_d(L,\varepsilon/3)$, then $d(x,L)\leq \varepsilon/3$
 and $d(y,L)\leq d(y,x)+\varepsilon/3$, hence $\delta_L(x,y)\leq \varepsilon$.\par
 Conversely,  let  $\varepsilon>0$
 and suppose that $X\setminus B_d(L,\varepsilon)$ is not uniformly isolated in $(X,d)$. Then,
 for each $n\in\mathbb N$, there are $x_n,y_n\in X\setminus B_d(L,\varepsilon)$
 such that $0<d(x_n,y_n)<1/n$. Since $\delta_L(x_n,y_n)\geq \varepsilon$,
 $d$ and $\delta_L$ are not uniformly equivalent.
 \end{proof}
 
 The following answers the natural question of whether  $U(X,\delta_L)$ is a ring in case $L=X'$.  
 \begin{cor} Let $L=X'$. Then $U(X,\delta_L)$ is a ring iff 
 $(X',d)$  is Bourbaki bounded.
 \end{cor}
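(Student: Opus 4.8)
The plan is to reduce everything to the already-established Theorem 3.1 and Proposition 2.2, together with Lemmas 3.8, 3.9 and 3.10, the only genuine work being the passage between boundedness in the ambient space $(X,\delta_L)$ and boundedness of the subspace $(X',d)$. First I would record two elementary facts about $L=X'$. Since $X'$ is closed, the observation following the definition of $\delta_L$ applies (with $\overline L=L=X'$ and $X'\subset\overline L$) and shows that $d$ and $\delta_L$ are topologically equivalent; in particular the set of limit points of $(X,\delta_L)$ is again exactly $L$. Moreover, for $x,y\in L$ one has $d(x,L)=d(y,L)=0$, so $\delta_L(x,y)=d(x,y)$; thus the subspace $(L,\delta_L)$ coincides with $(X',d)$. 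Consequently the corollary will follow once I show that $(L,\delta_L)$ is Bourbaki bounded as a subspace if and only if $U(X,\delta_L)$ is a ring.

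For the direction assuming $(X',d)$ Bourbaki bounded, I would argue as follows. Since $(L,\delta_L)=(X',d)$ is Bourbaki bounded as a subspace, $L$ is Bourbaki bounded in the ambient space $(X,\delta_L)$ (a set bounded in itself is a fortiori bounded in any space containing it, as chains inside the subspace remain chains in the larger space). Taking $B=L$ and $n=1$, Lemma 3.10 shows that $X\setminus U[L]=X\setminus B_{\delta_L}(L,\eta)$ is uniformly isolated for every entourage $U$; hence condition (x) of Section 2 holds for $(X,\delta_L)$, and Theorem 3.1 gives that $U(X,\delta_L)$ is a ring.

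For the converse I would use Proposition 2.2 to get into the subspace. If $U(X,\delta_L)$ is a ring, then $(X,\delta_L)$ satisfies condition (iii) by Theorem 3.1, so Proposition 2.2 applies and the set of limit points of $(X,\delta_L)$ is bounded in $(X,\delta_L)$; by the first paragraph this set is $L$, so $L$ is bounded in $(X,\delta_L)$. Now fix $\varepsilon>0$ and set $M=B_{\delta_L}(L,\varepsilon)$. By Lemma 3.10, $X\setminus M$ is uniformly isolated in $(X,\delta_L)$, and $L\subset M$, so Lemma 3.8 yields that $L$ is bounded in $(M,\delta_L)=(B_{\delta_L}(L,\varepsilon),\delta_L)$. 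As $\varepsilon>0$ was arbitrary, Lemma 3.9 (applied in $(X,\delta_L)$ with $M=L$) shows that $(L,\delta_L)$ is bounded, that is, $(X',d)$ is Bourbaki bounded.

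The only delicate point, and the step I expect to be the main obstacle, is the descent from ``$L$ bounded in the ambient space $(X,\delta_L)$'' to ``$(L,\delta_L)$ bounded as a subspace'', since Bourbaki boundedness in the large space is a priori weaker (the chains and finite centres witnessing it may leave $L$). This is exactly what Lemmas 3.8 and 3.9 are designed to handle, and the crucial input that makes them applicable is Lemma 3.10, which guarantees that each complement $X\setminus B_{\delta_L}(L,\varepsilon)$ is uniformly isolated; without this uniform isolation there would be no reason for the ambient chains to remain inside the enlargements of $L$.
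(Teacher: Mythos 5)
Your argument is correct and is essentially the paper's own proof: the converse direction uses Proposition 2.2 (via Theorem 3.1) followed by Lemmas 3.10, 3.8 and 3.9 in exactly the same order, and the forward direction combines Lemma 3.10 with Theorem 3.1 just as the paper does. The extra observations you make explicit (that $\delta_L=d$ on $X'$, that the limit points are unchanged, and that boundedness in itself passes to the ambient space) are correctly identified and only flesh out steps the paper leaves implicit.
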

 \begin{proof} Suppose that $U(X,\delta_L)$ is a ring. Then, by Proposition 2.2,  $X'$ is bounded in $(X,\delta_L)$. By 
 Lemma 3.10, for each $\varepsilon>0$, $X\setminus B_{\delta_L}(X',\varepsilon)$ is uniformly isolated
 in $(X,\delta_L)$, hence by Lemma 3.8, $X'$ is bounded in $(B(X',\varepsilon),\delta_L)$. 
 It follows now from  Lemma 3.9 that $(X',\delta_L)$ is bounded. Since $d=\delta_L$ on $X'$, we obtain
 that $(X',d)$ is bounded.\par
 The converse follows  from Lemma 3.10 and Theorem 3.1.
 \end{proof}
 
 \begin{theo} Let $X$ be a metrizable space with 
 a finitely chainable  closed subspace $L$ such that $X'\subset L$. Then, there is  
 a compatible metric $\delta$ on $X$ such that for every $\varepsilon>0$,
 $X\setminus B_\delta(L,\varepsilon)$ is uniformly isolated . In particular, $U(X,\delta)$
 is a ring. 
 \end{theo}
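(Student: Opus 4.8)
The plan is to realize the desired metric as Beer's metric $\delta_L$ built from a carefully chosen background metric on $X$, and then to extract both conclusions directly from Lemma 3.10 and the characterizations of Section 2. First I would use the finite chainability of $L$: by definition there is a compatible metric $\rho$ on $L$ for which $(L,\rho)$ is Bourbaki bounded. Since $L$ is \emph{closed} in the metrizable space $X$, Hausdorff's classical metric extension theorem yields a compatible metric $d$ on $X$ with $d|_L=\rho$. With this $d$ fixed I would put $\delta=\delta_L$, the metric introduced just before Lemma 3.10 by $\delta_L(x,y)=d(x,y)+\max\{d(x,L),d(y,L)\}$ for $x\neq y$.

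Next I would verify the two required features of $\delta$. Compatibility is the observation recorded before Lemma 3.10: as $L$ is closed we have $\overline{L}=L$, and the hypothesis $X'\subset L=\overline{L}$ forces $d$ and $\delta_L$ to be topologically equivalent (concretely, every point of $X\setminus L$ fails to be a limit point and is thus isolated in $X$, so the extra term in $\delta_L$ only rescales distances at points that were already isolated). The first assertion of the theorem is then exactly Lemma 3.10: for every $\varepsilon>0$ the set $X\setminus B_{\delta}(L,\varepsilon)$ is uniformly $\varepsilon/2$-isolated in $(X,\delta)$, hence uniformly isolated.

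For the \emph{in particular} clause I would check condition (xii) (equivalently (x)) of Section 2 with the bounded set taken to be $L$ itself, and then invoke Theorem 3.1. The key point is that for $x,y\in L$ one has $d(x,L)=d(y,L)=0$, so $\delta|_L=d|_L=\rho$; hence $(L,\delta|_L)=(L,\rho)$ is Bourbaki bounded, and since every $\varepsilon$-chain lying inside $L$ is in particular an $\varepsilon$-chain in $X$, the set $L$ is Bourbaki bounded in the ambient space $(X,\delta)$. Given any entourage $U\in\mathcal U_\delta$, I would pick $\varepsilon>0$ with $\{(x,y):\delta(x,y)<\varepsilon\}\subset U$; then $B_\delta(L,\varepsilon)\subset U[L]$, so $X\setminus U[L]\subset X\setminus B_\delta(L,\varepsilon)$ is uniformly isolated by Lemma 3.10 (a subset of a uniformly isolated set being uniformly isolated). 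This is precisely condition (xii), so by the obvious implication (xii) $\Rightarrow$ (x) together with Theorem 3.1, $U(X,\delta)$ is a ring.

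The only genuinely nontrivial external ingredient is Hausdorff's theorem on extending a compatible metric from a closed subspace: it is what converts the \emph{intrinsic} finite chainability of $L$ into a \emph{global} metric on $X$ whose restriction to $L$ remains Bourbaki bounded, and I expect this to be the single step that is not mere bookkeeping over the earlier lemmas. The degenerate case $X'=\emptyset$, in which $X$ is discrete and $L$ may be taken to be any singleton, is handled verbatim by the same construction and would be disposed of in one line.
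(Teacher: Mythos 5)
Your proposal is correct and follows essentially the same route as the paper: extend a Bourbaki-bounded compatible metric from $L$ to $X$ via Hausdorff's theorem, pass to Beer's metric $\delta_L$, and combine Lemma 3.10 with Theorem 3.1. The only difference is cosmetic — the paper compresses the last step into a citation of (the converse direction of) Corollary 3.12, whereas you verify condition (xii) with $B=L$ explicitly, which in fact spells out a detail the paper leaves implicit since Corollary 3.12 is stated only for $L=X'$.
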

 \begin{proof} Let  $d_0$ be a compatible metric on the subspace $L$ of $X$
 such that $(L,d_0)$ is bounded.  By Hausdorff theorem \cite{Ha} (see \cite{Hu} for more information), $d_0$
extends to a compatible metric $d$ on $X$. To conclude,
  let $\delta=\delta_L$ be the metric associated to $d$ and $L$ and apply Corollary 3.12.
  \end{proof}

  \acknowledgment{\rm
  
 We are grateful to the referee for her/his thoughtful remarks and observations and to Professor J. Cabello S\'anchez for drawing our attention to  the work 
  of Gerald  Beer, M. Isabel  Garrido and Ana S. Mero\~no \cite{BGM}. 
We would also like to express our thanks to Professor Ana S. Mero\~no for sending us a preprint of that paper.}

\end{document}